\begin{document}

\theoremstyle{plain}

\newtheorem{thm}{Theorem}[section]

\newtheorem{lem}[thm]{Lemma}
\newtheorem{Problem B}[thm]{Problem B}

\newtheorem{pro}[thm]{Proposition}
\newtheorem{cor}[thm]{Corollary}
\newtheorem{que}[thm]{Question}
\newtheorem{rem}[thm]{Remark}
\newtheorem{defi}[thm]{Definition}

\newtheorem*{thmA}{Theorem A}
\newtheorem*{thmB}{Theorem B}
\newtheorem*{corB}{Corollary B}
\newtheorem*{thmC}{Theorem  C}
\newtheorem*{thmD}{Theorem D}
\newtheorem*{thmE}{Theorem E}

\newtheorem*{thmAcl}{Main Theorem$^{*}$}
\newtheorem*{thmBcl}{Theorem B$^{*}$}

\newcommand{\Maxn}{\operatorname{Max_{\textbf{N}}}}
\newcommand{\Syl}{\operatorname{Syl}}
\newcommand{\dl}{\operatorname{dl}}
\newcommand{\Con}{\operatorname{Con}}
\newcommand{\cl}{\operatorname{cl}}
\newcommand{\Stab}{\operatorname{Stab}}
\newcommand{\Aut}{\operatorname{Aut}}
\newcommand{\Sym}{\operatorname{Aut}}
\newcommand{\Ker}{\operatorname{Ker}}
\newcommand{\fl}{\operatorname{fl}}
\newcommand{\Irr}{\operatorname{Irr}}
\newcommand{\SL}{\operatorname{SL}}
\newcommand{\FF}{\mathbb{F}}
\newcommand{\NN}{\mathbb{N}}
\newcommand{\N}{\mathbf{N}}
\newcommand{\bfC}{\mathbf{C}}
\newcommand{\bfO}{\mathbf{O}}
\newcommand{\bfF}{\mathbf{F}}
\newcommand{\OO}{\mathbf{O}}

\renewcommand{\labelenumi}{\upshape (\roman{enumi})}

\newcommand{\PSL}{\operatorname{PSL}}
\newcommand{\PSU}{\operatorname{PSU}}
\newcommand{\alt}{\operatorname{Alt}}
\newcommand{\GL}{\operatorname{GL}}
\newcommand{\SO}{\operatorname{SO}}
\newcommand{\SU}{\operatorname{SU}}

\providecommand{\V}{\mathrm{V}}
\providecommand{\E}{\mathrm{E}}
\providecommand{\ir}{\mathrm{Irr_{rv}}}
\providecommand{\Irrr}{\mathrm{Irr_{rv}}}
\providecommand{\re}{\mathrm{Re}}

\numberwithin{equation}{section}
\def\irrp#1{{\rm Irr}_{p'}(#1)}

\def\ibrrp#1{{\rm IBr}_{\Bbb R, p'}(#1)}
\def\Z{{\mathbb Z}}
\def\C{{\mathbb C}}
\def\Q{{\mathbb Q}}
\def\irr#1{{\rm Irr}(#1)}
\def\aut#1{{\rm Aut}}
\def\ibr#1{{\rm IBr}(#1)}
\def\irrp#1{{\rm Irr}_{p^\prime}(#1)}
\def\irrq#1{{\rm Irr}_{q^\prime}(#1)}
\def \c#1{{\cal #1}}
\def\cent#1#2{{\bf C}_{#1}(#2)}
\def\syl#1#2{{\rm Syl}_#1(#2)}
\def\nor{\trianglelefteq\,}
\def\oh#1#2{{\bf O}_{#1}(#2)}
\def\Oh#1#2{{\bf O}^{#1}(#2)}
\def\zent#1{{\bf Z}(#1)}
\def\det#1{{\rm det}(#1)}
\def\ker#1{{\rm ker}(#1)}
\def\norm#1#2{{\bf N}_{#1}(#2)}
\def\alt#1{{\rm Alt}(#1)}
\def\iitem#1{\goodbreak\par\noindent{\bf #1}}
   \def \mod#1{\, {\rm mod} \, #1 \, }
\def\sbs{\subseteq}

\def\gc{{\bf GC}}
\def\ngc{{non-{\bf GC}}}
\def\ngcs{{non-{\bf GC}$^*$}}
\newcommand{\notd}{{\!\not{|}}}
\newcommand{\Out}{{\mathrm {Out}}}
\newcommand{\Mult}{{\mathrm {Mult}}}
\newcommand{\Inn}{{\mathrm {Inn}}}
\newcommand{\IBR}{{\mathrm {IBr}}}
\newcommand{\IBRL}{{\mathrm {IBr}}_{\ell}}
\newcommand{\IBRP}{{\mathrm {IBr}}_{p}}
\newcommand{\ord}{{\mathrm {ord}}}
\def\id{\mathop{\mathrm{ id}}\nolimits}
\renewcommand{\Im}{{\mathrm {Im}}}
\newcommand{\Ind}{{\mathrm {Ind}}}
\newcommand{\diag}{{\mathrm {diag}}}
\newcommand{\soc}{{\mathrm {soc}}}
\newcommand{\End}{{\mathrm {End}}}
\newcommand{\sol}{{\mathrm {sol}}}
\newcommand{\Hom}{{\mathrm {Hom}}}
\newcommand{\Mor}{{\mathrm {Mor}}}
\newcommand{\Mat}{{\mathrm {Mat}}}
\def\rank{\mathop{\mathrm{ rank}}\nolimits}
\newcommand{\Tr}{{\mathrm {Tr}}}
\newcommand{\tr}{{\mathrm {tr}}}
\newcommand{\Gal}{{\it Gal}}
\newcommand{\Spec}{{\mathrm {Spec}}}
\newcommand{\ad}{{\mathrm {ad}}}

\newcommand{\Char}{{\mathrm {char}}}
\newcommand{\pr}{{\mathrm {pr}}}
\newcommand{\rad}{{\mathrm {rad}}}
\newcommand{\abel}{{\mathrm {abel}}}
\newcommand{\codim}{{\mathrm {codim}}}
\newcommand{\ind}{{\mathrm {ind}}}
\newcommand{\Res}{{\mathrm {Res}}}
\newcommand{\Lie}{{\mathrm {Lie}}}
\newcommand{\Ext}{{\mathrm {Ext}}}
\newcommand{\Alt}{{\mathrm {Alt}}}
\newcommand{\AAA}{{\sf A}}
\newcommand{\SSS}{{\sf S}}
\newcommand{\CC}{{\mathbb C}}
\newcommand{\CB}{{\mathbf C}}
\newcommand{\RR}{{\mathbb R}}
\newcommand{\QQ}{{\mathbb Q}}
\newcommand{\ZZ}{{\mathbb Z}}
\newcommand{\bfN}{{\mathbf N}}
\newcommand{\bfZ}{{\mathbf Z}}
\newcommand{\EE}{{\mathbb E}}
\newcommand{\PP}{{\mathbb P}}
\newcommand{\cG}{{\mathcal G}}
\newcommand{\cH}{{\mathcal H}}
\newcommand{\cQ}{{\mathcal Q}}
\newcommand{\GA}{{\mathfrak G}}
\newcommand{\cT}{{\mathcal T}}
\newcommand{\cS}{{\mathcal S}}
\newcommand{\cR}{{\mathcal R}}
\newcommand{\GCD}{\GC^{*}}
\newcommand{\TCD}{\TC^{*}}
\newcommand{\FD}{F^{*}}
\newcommand{\GD}{G^{*}}
\newcommand{\HD}{H^{*}}
\newcommand{\GCF}{\GC^{F}}
\newcommand{\TCF}{\TC^{F}}
\newcommand{\PCF}{\PC^{F}}
\newcommand{\GCDF}{(\GC^{*})^{F^{*}}}
\newcommand{\RGTT}{R^{\GC}_{\TC}(\theta)}
\newcommand{\RGTA}{R^{\GC}_{\TC}(1)}
\newcommand{\Om}{\Omega}
\newcommand{\eps}{\epsilon}
\newcommand{\al}{\alpha}
\newcommand{\chis}{\chi_{s}}
\newcommand{\sigmad}{\sigma^{*}}
\newcommand{\PA}{\boldsymbol{\alpha}}
\newcommand{\gam}{\gamma}
\newcommand{\lam}{\lambda}
\newcommand{\la}{\langle}
\newcommand{\ra}{\rangle}
\newcommand{\hs}{\hat{s}}
\newcommand{\htt}{\hat{t}}
\newcommand{\tn}{\hspace{0.5mm}^{t}\hspace*{-0.2mm}}
\newcommand{\ta}{\hspace{0.5mm}^{2}\hspace*{-0.2mm}}
\newcommand{\tb}{\hspace{0.5mm}^{3}\hspace*{-0.2mm}}
\def\skipa{\vspace{-1.5mm} & \vspace{-1.5mm} & \vspace{-1.5mm}\\}
\newcommand{\tw}[1]{{}^#1\!}
\renewcommand{\mod}{\bmod \,}

\marginparsep-0.5cm

\renewcommand{\thefootnote}{\fnsymbol{footnote}}
\footnotesep6.5pt

\newcommand{\sym}{{\mathrm {Sym}}}
\newcommand{\Sp}{{\mathrm {Sp}}}
\newcommand{\PSp}{{\mathrm {PSp}}}

\title{Modular Characters, Hall subgroups,  and Normal Complements}

\author{Robert Guralnick}
\address{Department of Mathematics, University of Southern California, 3620 S. Vermont Ave., Los Angeles, CA 90089, USA}
\email{guralnic@usc.edu}
\author{Gabriel Navarro}
\address{Departament de Matem\`atiques, Universitat de Val\`encia, 46100 Burjassot,
Val\`encia, Spain}
\email{gabriel@uv.es}
  
\thanks{The first  author
gratefully acknowledges the support of the NSF grant DMS-1901595 and he thanks A. Kleshchev
for a discussion of representations of the symmetric group.  The second author
is supported by MCIN/AEI 10.13039/501100011033,
  Grants PID2022-137612NB-I00 and ERDF ``A way of making Europe", and he thanks A. Moret\'o for conversations on this subject.
  Both authors thank G. Malle for comments and corrections, and H. Tong-Viet for pointing out \cite{F}.}

\keywords{Extension of characters, normal complements, Hall subgroups}

\subjclass[2010]{Primary 20C15}

\begin{abstract}
If $H$ is a Hall subgroup of a finite group $G$, it was proven in 1989 using the classification of finite simple groups that all the irreducible complex characters of $H$ extend to $G$ if
and only if there is $N\nor G$ such that $HN=G$ and $H\cap N=1$.  In this note we offer a modular version of this result under more general hypothesis.
\end{abstract}

\maketitle

\section{Introduction}
One of the first applications of character theory was to produce normal subgroups of finite groups. In many cases, the   problem   was: given a Hall subgroup $H$ of a finite group $G$,  when $H$ does  have a normal complement in $G$? In other words, if $H$ is a subgroup of $G$ such that $|H|$ and $|G:H|$ are coprime, when there is $N \nor G$ such that $G=NH$ and 
$H\cap N=1$?  Originally, this problem was treated extensively in the literature, for instance by  Baer, Frobenius, Higman, Sah, Thompson and Wielandt among many others
(\cite{B}, \cite{Fr}, \cite{Hi}, \cite{S}, \cite{T}, \cite{W}).  If $H$ is a Sylow $p$-subgroup of $G$
(or more generally, if $H$ is nilpotent), the famous Brauer--Suzuki theorem asserts that this happens if and only if $h^G \cap H=h^H$ for every $h \in H$ (where $h^G$ is the conjugacy class of $h$ in $G$).
 If $H$ is not nilpotent, this is not necessarily true, as shown, for instance, by $H={\sf S}_{p-1}$ and $G={\sf S}_p$ for $p>4$
prime.

If the attention is turned to complex characters, it is trivial to observe that
if $H$ has a normal complement in $G$, then every irreducible
 character $\alpha \in \irr H$ extends to some $\beta \in \irr G$. Of course, it was natural to ask about the converse, and this question was settled in 1989 by P. Ferguson, using the Classification of Finite Simple Groups (CFSG).
 
  At first sight,  this result does not seem to admit a  modular version with $p$-Brauer characters, if only by considering the case where $H \in \syl pG$.  Less trivially, for $p=3$, notice that all irreducible $3$-Brauer characters of ${\sf A}_4$ extend to ${\sf A}_5$.  
  In fact,  note that if $G(q)$ is a finite simply connected
  simple group of Lie type over the field of $q$ elements, then all its irreducible representations in the defining characteristic 
   extend even to the simple algebraic group.

\medskip

 The main result in this note is to prove the following.    
  \begin{thmA}
  Let $G$ be a finite group,  let $H$ be a Hall subgroup of $G$, and let $p$
  be a prime not dividing $|H|$.
Then every $\alpha \in \irr H$ extends to a $p$-Brauer character of $G$
if and only if $H$ has a normal complement in $G$.
  \end{thmA}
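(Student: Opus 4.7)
The ``$\Leftarrow$'' direction is formal: a normal complement $N$ of $H$ in $G$ gives $G/N\cong H$, so inflating each $\alpha\in\Irr(H)$ along this quotient yields an irreducible ordinary character $\tilde\alpha$ of $G$. Since $p\nmid|H|$, the underlying $\bar{\mathbb F}_pH$-module of $\alpha$ is absolutely irreducible, and every $G$-submodule of its inflation is automatically $N$-stable and so descends to $G/N\cong H$; hence the inflation is still simple as a $\bar{\mathbb F}_pG$-module, providing the irreducible $p$-Brauer character of $G$ that extends $\alpha$.

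For the converse, I would argue by induction on $|G|$ via a minimal counterexample $(G,H)$. Two standard reductions streamline matters. First, $\OO_p(G)=1$: a normal $p$-subgroup $M$ lies in the kernel of every irreducible $p$-Brauer character of $G$, so extensions factor through $G/M$, the hypothesis descends to $(G/M,\,HM/M\cong H)$, and a normal complement in the quotient pulls back. Second, let $K:=\OO^p(G)\supseteq H$ (since $H$ is $p'$). Clifford's theorem applied to an extension $\tilde\alpha\in\IBRP(G)$ of $\alpha$ and to $K\nor G$ yields $\tilde\alpha|_K=e(\phi_1+\cdots+\phi_t)$; restriction to $H\le K$ gives $\alpha=e\sum\phi_i|_H$, but $\alpha$ is irreducible and each $\phi_i|_H$ is a nonzero character of the $p'$-group $H$, forcing $e=t=1$. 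Thus $(K,H)$ inherits the hypothesis, and if $K<G$ induction produces a normal complement $L$ of $H$ in $K$, which is the unique Hall $\pi(H)'$-subgroup of $K$, hence characteristic in $K\nor G$ and normal in $G$.

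The task then reduces to $\bar G:=G/L$, where $\bar K:=K/L\cong H$ is a normal $p'$-subgroup and $\bar G/\bar K$ is a $p$-group; Schur--Zassenhaus gives $\bar G=\bar K\rtimes P$ for a Sylow $p$-subgroup $P$, and a normal complement of $H$ in $G$ corresponds to $P$ centralizing $\bar K$. In this setting $H^2(P,\bar{\mathbb F}_p^\times)=0$ (as $P$ is a $p$-group and the coefficients are uniquely $p$-divisible), so the Brauer extension hypothesis is equivalent to each $\alpha\in\Irr(H)$ being $P$-stable; by Brauer's permutation lemma $P$ then stabilizes every conjugacy class of $H$.

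The main obstacle is the remaining step: upgrading this class-stability, together with the residual analysis of the case $K=G$ (where the second reduction fails), to the conclusion that $P$ centralizes $H$. Since Burnside-type examples show that class-preserving $p$-automorphisms need not be inner, a naive group-theoretic argument is insufficient, and one must use the full strength of Ferguson's CFSG-based ordinary-character theorem \cite{F} together with careful handling of Schur-multiplier obstructions in order to lift the Brauer extensions to ordinary ones on each almost simple composition factor of $H$. The hypothesis $p\nmid|H|$ is indispensable --- as the introduction notes, defining-characteristic representations of simply connected Lie-type groups extend freely to the ambient algebraic group and would otherwise provide counterexamples --- and the acknowledgement to Kleshchev signals that symmetric and alternating groups demand particular attention in the case analysis.
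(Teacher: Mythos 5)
Your ``$\Leftarrow$'' direction is fine, and the preliminary reductions in the converse ($\OO_p(G)=1$; descent of the hypothesis to $K=\OO^p(G)$ via Clifford's theorem; the observation that $H^2(P,\overline{\FF}_p^\times)=0$ makes Brauer-extendibility over a normal $p'$-subgroup with $p$-group quotient equivalent to stability) are all correct as far as they go. The difficulty is that they go almost nowhere: the second reduction is vacuous exactly when $\OO^p(G)=G$, and that case contains the entire substance of the theorem (for instance $G$ almost simple with a nonsolvable Hall subgroup $H$ of odd index, where one must exhibit some $\alpha\in\irr H$ with \emph{no} extension to $\ibr G$). Your proposed resolution of that case --- ``lift the Brauer extensions to ordinary ones'' and invoke Ferguson's theorem --- rests on a false premise: an irreducible $p$-Brauer character of $G$ that restricts to $\alpha$ need not be liftable to an ordinary character, so the existence of a Brauer-character extension of $\alpha$ is genuinely weaker than the existence of an ordinary-character extension, and Ferguson's hypothesis simply cannot be verified this way. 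This is precisely the obstruction the paper is written to overcome, and it is why its argument is not a reduction to \cite{F}: after disposing of solvable $H$ elementarily (Section 2), it restricts attention to \emph{primitive} characters, shows (Lemma \ref{prim}) that a Brauer extension of a primitive character can be twisted so as to contain a normal $\pi'$-subgroup in its kernel, reduces to almost simple groups, and then, using the Liebeck--Saxl classification of odd-index subgroups, produces in each family an explicit primitive character of $H\cap\soc(G)$ that extends to ${\rm Aut}$ of the simple factor but to no irreducible $p$-Brauer character of the socle --- which requires genuinely modular input (the Seitz--Zalesski minimal-degree bounds, L\"ubeck's tables in defining characteristic, and James/Brundan--Kleshchev for the alternating case in Lemma \ref{l:js}).

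A secondary gap: even in the branch $K<G$, your passage from $G$ to $\bar G=G/L$ assumes that an extension $\tilde\alpha\in\ibr G$ of $\alpha$ can be chosen with $L$ in its kernel, so that stability of $\alpha$ under $P$ can be read off from $\tilde\alpha$. That is not automatic (a priori $\tilde\alpha|_L$ need not be trivial), and proving it is essentially the content of the paper's Lemma \ref{prim}, which again works only for primitive $\alpha$ and requires a twisting argument via Green's theorem. Finally, your closing remark that class-stability cannot be upgraded to centralization by elementary means is correct, but note that in your reduced configuration the cleaner route is that a $P$-stable $\alpha$ extends to an \emph{ordinary} character of $\bar K\rtimes P$ by coprimality (Isaacs 8.16), so that sub-case really is a corollary of \cite{F}; the irreducibly hard case remains $\OO^p(G)=G$, for which your plan contains no workable argument.
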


   The easier case where $H$ is solvable is proven in Section 2 below, without using the CFSG.
   In the case where $H$ is not solvable, our new  strategy here is not to assume that
  every character of $H$ extends to $G$, but only that every {\sl primitive} character does, together with the  usual group theoretical conditions
  that follow from the character extension property.  Then CFSG is used.   
  \medskip

  Of course, by choosing any prime $p$ not dividing $|G|$, we see that Theorem A implies the main result of \cite{F}.
  
  We prove a stronger result (for ordinary characters) if  $G$ is an almost simple group  where we replace the Hall subgroup 
  assumption by the condition that $H$ has odd index in $G$.    See Theorem \ref{ccl2}.
  
  \section{Solvable Hall subgroups}
  Our notation for characters is as in \cite{I}. Our notation for Brauer characters is as in \cite{N1}. Hence, we have chosen  a fixed maximal ideal $M$ of the ring of algebraic integers $\bf R$ containing $p$, 
  worked in the algebraically closed field $F={\bf R}/M$, and we have defined a set $\ibr G$ of the irreducible
  $p$-Brauer characters of every finite group $G$, for the rest of the paper.
  
  \medskip
   In this section, we first dispose of the easy case of Theorem A where
  $H$ is solvable.
  
  \medskip
  
   If $H \le G$ and every irreducible character of $H$ extends to $G$,  the following two are essentially the only known group theoretical consequences at our disposal.   If $G$ is a finite group, in this paper $g^G$ denotes the conjugacy class of $g$ in $G$.
  Recall that a subgroup $H$ of $G$ is $c$-closed if $h^G \cap H=h^H$ for every $h \in H$.

  \begin{lem}\label{elem}
  Let $H \le G$, $p$ a prime such that $p$ does not divide $|H|$.
  Suppose that every character of $H$ extends to a $p$-Brauer character of $G$.
  Then
  \begin{enumerate}[(i)]
  \item
  For every $h \in H$ we have that $h^G \cap H=h^H$.
  \item
  For every $L \nor H$, there is $K \nor G$ such that $K\cap H=L$, and such that every character of $HK/K$ extends to a $p$-Brauer character of $G/K$.
  \end{enumerate}
  \end{lem}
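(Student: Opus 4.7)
The plan for (i) is to convert the extension hypothesis into an equality of character values. If $h \in H$ and $g \in G$ satisfy $h^g \in H$, both are $p$-regular because $p \nmid |H|$, so every $\chi \in \ibr G$ (being a class function on the $p$-regular elements of $G$) takes the same value at $h$ and $h^g$. Applying this to the hypothesized extension $\chi_\alpha \in \ibr G$ of an arbitrary $\alpha \in \irr H$, and using that $\chi_\alpha|_H = \alpha$ since $H$ consists of $p$-regular elements, we obtain $\alpha(h) = \alpha(h^g)$ for every $\alpha \in \irr H$. By the standard fact that irreducible characters separate $H$-conjugacy classes, $h$ and $h^g$ lie in the same $H$-class.

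For (ii) the plan is to take $K$ to be the intersection of the kernels of carefully chosen extensions. Let $\Lambda$ be the set of $\alpha \in \irr H$ with $L \le \ker\alpha$, naturally identified with $\irr{H/L}$. For each $\alpha \in \Lambda$ fix an extension $\chi_\alpha \in \ibr G$, afforded by an irreducible $F$-representation $\phi_\alpha$; since $\chi_\alpha$ is irreducible, the kernel $N_\alpha \nor G$ is determined by $\chi_\alpha$. Set $K := \bigcap_{\alpha \in \Lambda} N_\alpha$, normal in $G$. To check $K \cap H = L$, the forward containment is: for $\ell \in L$ one has $\chi_\alpha(\ell) = \alpha(\ell) = \alpha(1) = \chi_\alpha(1)$, and since $\ell$ has $p'$-order, $\phi_\alpha(\ell)$ is diagonalizable over $F$ with roots-of-unity eigenvalues whose lifts in $\mathbf{R}$ sum to $\chi_\alpha(1)$, forcing every such eigenvalue to equal $1$; thus $\phi_\alpha(\ell) = 1$, so $\ell \in K$. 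Conversely, any $h \in K \cap H$ satisfies $\phi_\alpha(h) = 1$ for every $\alpha \in \Lambda$, and since $\phi_\alpha|_H$ affords $\alpha$, this yields $h \in \bigcap_{\alpha \in \Lambda} \ker\alpha = L$. With $K \cap H = L$ verified, the second isomorphism theorem gives $HK/K \cong H/L$, and each $\chi_\alpha$ (whose kernel contains $K$) descends to an element of $\ibr{G/K}$ extending the character of $HK/K$ corresponding to $\alpha$.

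The only subtle point is the implication ``$\chi(\ell) = \chi(1)$ forces $\phi(\ell) = 1$'' for $p$-regular $\ell$; it rests on the semisimplicity of $\phi(\ell)$ together with the faithful bijection between $p'$-roots of unity in $F$ and their lifts in $\mathbf{R}$ fixed at the outset via the maximal ideal $M$. Once this fact is in hand, the lemma is a direct bookkeeping argument together with the second orthogonality relation.
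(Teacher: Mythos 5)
Your proposal is correct and follows essentially the same route as the paper: part (i) via the extension giving $\alpha(h)=\alpha(h^g)$ for all $\alpha\in\irr H$ and then second orthogonality, and part (ii) by taking $K$ to be the intersection of the kernels of the chosen extensions of the characters in $\irr{H/L}$, with $K\cap H=L$ following from the identification of the kernel of a $p$-Brauer character with the kernel of an affording representation. The only difference is that you spell out the $p$-regularity/eigenvalue-lifting justification for $\ker\hat\alpha\cap H=\ker\alpha$, which the paper leaves implicit.
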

  
  \begin{proof}
  Suppose that $h^g \in H$ for some $g \in G$. Let $\alpha \in \irr H$ and let $\hat\alpha \in \ibr G$ be such that $\hat\alpha_H=\alpha$. Then
  $\alpha(h^g)=\hat\alpha(h^g)=\hat\alpha(h)=\alpha(h)$, and (a) follows from
  the orthogonality of characters. 
  
  For every $\alpha \in \irr{H/L}$, choose $\hat\alpha \in \ibr G$ such that
  $\hat\alpha_H=\alpha$. 
  Let 
  $$K=\bigcap_{\alpha \in \irr{H/L}} \ker{\hat\alpha} \nor G \, .$$
  Notice that 
  $$K \cap H=\bigcap_{\alpha \in \irr{H/L}} \ker{\alpha}=L\, .$$
  Suppose that $\gamma \in \irr{HK/K}$. Then $\gamma_H=\alpha \in \irr{H/L}$. Since $K \sbs \ker{\hat\alpha}$, we have that $\hat\alpha_{KH}=\gamma$. 
\end{proof}

\begin{lem}\label{nor}
Suppose that $H$ is a Hall subgroup of $G$ which is $c$-closed in $G$.
Let $N\nor G$ such that $N \sbs H$ and $H/N$ has a normal complement $K/N$ in $G/N$. Then $H$ has a normal complement in $G$.
\end{lem}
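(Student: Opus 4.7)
The plan is to construct a candidate complement via Schur--Zassenhaus applied to $K$, and then use the $c$-closed hypothesis to force its normality. Since $|K\!:\!N| = |G\!:\!H|$ is coprime to $|H|$, hence to $|N|$, $N$ is a normal Hall subgroup of $K$. By Schur--Zassenhaus (using Feit--Thompson for conjugacy), $N$ admits a complement $M$ in $K$, and every $G$-conjugate of $M$ is $N$-conjugate to $M$. Because $N \subseteq H$, we get $G = HK = H(NM) = HM$ and $H \cap M \subseteq N \cap M = 1$, so $M$ complements $H$ in $G$. It remains to prove $M \triangleleft G$.

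The Schur--Zassenhaus conjugacy yields $G = N_G(M)\cdot N$, and by Dedekind (using $N \subseteq H$) one gets $H = (H \cap N_G(M))\,N$. Thus $M \triangleleft G$ is equivalent to $N \subseteq N_G(M)$, which (since $M \cap N = 1$) reduces to showing $[N,M] = 1$. The $c$-closed hypothesis enters through elements of $N$: for each $n \in N$, $n^G \subseteq N \subseteq H$, so $n^G = n^G \cap H = n^H$, whence $G = H\,C_G(n)$ for every $n \in N$. Passing to $\operatorname{Aut}(N)$, set $\bar G = G/C_G(N)$, $\bar H = H/C_H(N)$, and $\bar M = M/C_M(N)$. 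Then $\bar G = \bar H\,\bar M$ (from $G = HM$); since $|\bar H|$ divides $|H|$ and $|\bar M|$ divides $|M|$, coprimality gives $\bar H \cap \bar M = 1$ and $|\bar G| = |\bar H|\,|\bar M|$. Moreover, the identity $n^G = n^H$ for $n \in N$ translates into the statement that $\bar H$ and $\bar G$ have the same orbits on $N$.

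The final step is to deduce $\bar M = 1$ from the orbit equality together with the near-product decomposition $\bar G = \bar H\,\bar M$ of coprime factors. A preliminary induction on $|N|$ (using that $c$-closedness passes to the quotient $G/L$ for any $L \triangleleft G$ with $L \subseteq H$) reduces to the case $N$ is minimal $G$-normal; the abelian case is then handled by the semisimplicity of the $\bar M$-action on the elementary abelian $p$-group $N$ (with $p \nmid |M|$), and the nonabelian case by the structure of $N$ as a direct product of isomorphic simple groups. Once $\bar M = 1$, we have $[N,M]=1$, hence $M \triangleleft HM = G$. The main obstacle is precisely this last deduction: two subgroups of $\operatorname{Sym}(N)$ with identical orbits need not coincide, so the argument must use crucially that the actions here are by group automorphisms of $N$ and that $\bar M$ is a Hall subgroup of $\bar G$ whose order is coprime to $|N|$.
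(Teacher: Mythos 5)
Your opening moves coincide with the paper's: $N$ is a normal Hall $\pi$-subgroup of $K$ (where $\pi$ is the set of primes dividing $|H|$), Schur--Zassenhaus produces a complement $M$ of $N$ in $K$ which also complements $H$ in $G$, the Frattini-type decomposition $G=\norm GM N$ reduces normality of $M$ to $[N,M]=1$, and $c$-closedness gives $G=H\cent Gn$ for every $n\in N$. The gap is exactly the step you flag as ``the main obstacle'': the deduction that $\bar M=1$, which as written is an outline rather than a proof. The reduction to $N$ minimal normal can indeed be arranged (induct on $|G|$, using that $c$-closedness passes to $G/L$ so that the hypotheses of the lemma are inherited with $N$ replaced by a smaller $1<L\nor G$, $L\le N$), and the abelian case can be pushed through with Maschke --- though even there, ruling out $\cent NM=1$ needs a covering argument you do not supply. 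But for the nonabelian case, ``the structure of $N$ as a direct product of isomorphic simple groups'' is not an argument: that is precisely the hard case, and nothing in the bare orbit equality for $\bar H\le\bar G$ obviously forces a coprime complement $\bar M$ with $\bar G=\bar H\bar M$ to be trivial. As you yourself note, subgroups with the same orbits need not coincide, and you have not shown how automorphism-ness and coprimality rescue this.

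The missing idea, which also makes the detour through $\Aut(N)$ unnecessary, is to localize the class condition to $K$ before passing to any quotient. From $G=H\cent Gn$ one gets that $|K:\cent Kn|=|K\cent Gn:\cent Gn|$ divides $|G:\cent Gn|$, a $\pi$-number; since $N$ is a normal Hall $\pi$-subgroup of $K$, this forces $K=N\cent Kn$, i.e.\ $n^K=n^N$ for every $n\in N$. So $M$ acts coprimely on $N$ and fixes every $N$-class setwise; Glauberman's lemma then provides an $M$-fixed point in each class, so $\cent NM$ meets every conjugacy class of $N$, whence $N=\bigcup_{v\in N}\cent NM^v$ and therefore $\cent NM=N$, i.e.\ $[N,M]=1$ (and then $M$ is characteristic in $K\nor G$, being its unique Hall $\pi'$-subgroup). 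This is how the paper finishes, quoting Lemma 13.10 of \cite{I} for the coprime-action step; it needs no induction and no case division according to the structure of $N$. Your proposal is recoverable, but only by inserting this argument in place of the sketched final step.
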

\begin{proof} Let $\pi$ be the set of primes dividing $|H|$. 
 Now let $n \in N$ and $k \in K$. Then $n^G \cap H=n^H$ by hypothesis.
 Thus $n^k=n^h$ for some $h \in H$, and thus $n^{kh^{-1}}=n$
 and thus $K \sbs \cent Gn H$. Hence $G=KH \sbs \cent Gn H$ and
 we have that $|G:\cent Gn|$ is a $\pi$-number. Hence so  is $|K:\cent Kn|$. Since $N$
 is a Hall $\pi$-subgroup of $K$, we conclude that
 $K=N\cent Kn$ for every $n \in N$.  Hence all $N$-conjugacy classes are $K$-invariant. Let $A$ be a complement of $N$ in $K$. By coprime action we conclude that $A \nor K$. (Apply for instance, Lemma 13.10 of \cite{I}.) Hence $A \nor G$
 since $A$ is characteristic in $K \nor G$. 
\end{proof}
\begin{thm}\label{solv}
 Let $G$ be a finite group,  let $H$ be a Hall subgroup of $G$, and let $p$
  be a prime not dividing $|H|$.
  Assume that $H$ is solvable.
If every $\alpha \in \irr H$ extends to a $p$-Brauer character of $G$, 
then  $H$ has a normal complement in $G$.
\end{thm}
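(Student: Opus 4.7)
I would proceed by induction on $|G|$. The case $H=1$ is immediate, so suppose $H \neq 1$; by Lemma~\ref{elem}(i), $H$ is $c$-closed in $G$. If $H$ is nilpotent, the Brauer--Suzuki theorem recalled in the introduction (a nilpotent Hall subgroup has a normal complement if and only if it is $c$-closed) already yields the desired complement.

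So assume $H$ is non-nilpotent. Since $H$ is solvable, pick a proper nontrivial normal subgroup $L$ of $H$ (for instance $L=H'$, or any maximal proper normal subgroup of $H$). Lemma~\ref{elem}(ii) produces $K \nor G$ with $K \cap H = L$ such that every character of $HK/K$ extends to a $p$-Brauer character of $G/K$. Since $L \neq 1$ we have $K \supseteq L \neq 1$, so $|G/K|<|G|$; moreover $HK/K \cong H/L$ is a solvable Hall subgroup of $G/K$ satisfying the extension hypothesis. By induction, $HK/K$ admits a normal complement $M/K$ in $G/K$; unpacking, $M \nor G$, $G=HM$, and $H \cap M = L$.

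To finish, my plan is to apply the theorem inductively to the triple $(M,L,p)$. One checks that $L$ is a Hall subgroup of $M$ (since $[M:L]=[G:H]$ is coprime to $|L|$), $L$ is solvable, $p \nmid |L|$, and $|M|=[G:H]\,|L|<|G|$. Granting that the character-extension hypothesis descends to $(M,L,p)$, induction delivers a normal complement $A$ of $L$ in $M$. Because $A$ is the unique Hall $\pi'$-subgroup of $M$ (with $\pi$ the set of prime divisors of $|H|$), it is characteristic in $M$, hence normal in $G$. Then $HA \supseteq HLA = HM = G$ and $H \cap A \subseteq (H \cap M)\cap A = L \cap A = 1$, so $A$ is the desired normal complement of $H$ in $G$.

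The main obstacle is verifying the character-extension hypothesis for $(M,L,p)$: given $\alpha \in \irr{L}$, one must produce $\psi \in \ibr{M}$ with $\psi|_L = \alpha$. The strategy is to take an irreducible constituent $\beta \in \irr{H}$ of the induced character $\alpha^H$, use the hypothesis to extend $\beta$ to $\hat\beta \in \ibr{G}$, and decompose $\hat\beta|_M$ into $\ibr{M}$. The factorization $G=HM$ with $H \cap M = L$ gives a natural isomorphism $G/M \cong H/L$, and together with Clifford theory applied to $L$ inside $M$ (which itself requires first establishing $L \nor M$), this should allow one to extract an irreducible Brauer constituent of $\hat\beta|_M$ whose restriction to $L$ equals $\alpha$. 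Making this precise, and separating the cases in which $\alpha$ is or is not $H$-invariant (and in which $\alpha$ does or does not extend to $H$), is where the technical core of the proof lies.
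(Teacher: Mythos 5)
There is a genuine gap, and it sits exactly where you locate it: the claim that the extension hypothesis descends to the pair $(M,L)$, i.e.\ that every $\alpha\in\irr L$ extends to an irreducible $p$-Brauer character of $M$, is left unproven, and the strategy you sketch cannot work as stated. Note first that if $L$ were normal in $M$, then since $L\nor H$ and $G=HM$ we would have $L\nor G$, and Lemma~\ref{nor} (applied with $N=L$ and the complement $M/L$) already finishes the proof with no further character theory; so the descent is only needed precisely when $L\not\nor M$, where Clifford theory for $L\le M$ is unavailable. Even ignoring that, an irreducible Brauer constituent $\psi$ of $\hat\beta_M$ has no reason to satisfy $\psi_L=\alpha$: all one knows is that $\alpha$ is a constituent of $\hat\beta_L=\beta_L$, and the constituents of $\hat\beta_M$ may mix the $H$-conjugates of $\alpha$ with multiplicities. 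Establishing such a going-down statement is essentially as hard as the theorem itself, so the induction does not close.

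The paper sidesteps this entirely by taking $L=M$ to be a \emph{minimal} normal subgroup of $H$, hence an abelian $q$-group. After the same quotient reduction (which you carry out correctly) one obtains $L\nor G$ with $HL=G$ and $H\cap L=M$, and then two cases: if $M\nor G$, Lemma~\ref{nor} and $c$-closure finish; otherwise $\norm GM<G$, and since the extension hypothesis passes trivially to any subgroup of $G$ containing $H$ (restriction of $\hat\alpha$ to such a subgroup is a Brauer character restricting irreducibly to $H$, hence irreducible), induction applies to $\norm GM$ and gives $\norm LM=M\times X$; thus $M$ is a Sylow $q$-subgroup of $L$ central in its normalizer, and Burnside's normal $q$-complement theorem produces the complement $Y\nor G$. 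You should replace your descent to $(M,L)$ by this normalizer/Burnside argument; your Brauer--Suzuki reduction for nilpotent $H$ is correct but unnecessary.
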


\begin{proof}
We argue by induction on $|G|$. By Lemma \ref{elem}(i),
we have that $H$ is $c$-closed in $G$. 
Let $M$ be a minimal normal subgroup of $H$, so that $M$ is
an abelian $q$-group for some prime $q$. By Lemma \ref{elem},
 let $K \nor G$ such that $K\cap H=M$ and
such that all irreducible characters of $HK/K$ extend to a $p$-Brauer character of $G/K$. Since $K>1$, by induction, there is $L \nor G$ such that $HL=G$ and $H\cap L=M$.
If $M \nor G$, then we apply Lemma \ref{nor}, and we are done.
So we may assume that $\norm GM< G$. By induction, we have that
$\norm GM=HX$ where $X \nor \norm GM$ and $H\cap X=1$.
Now, $\norm LM=M \times X$. Since $H$ is a Hall subgroup, we have that
$M$ is a Sylow $q$-subgroup of $L$. Also $M \sbs \zent{\norm LM}$.
Therefore $M$ has a normal $q$-complement $Y$ in $L$ by Burnside's Theorem. Then $Y\nor G$ is a complement of $H$ in $G$. 
\end{proof}

   \section{Primitive Characters}

   Recall that an irreducible complex character $\chi \in \irr G$ is primitive if there
   is no $U<G$ and $\gamma \in \irr U$  such that $\gamma^G=\chi$. 
   
 \begin{lem} \label{prim}
  Let $G$ be a finite group,  let $H$ be a Hall subgroup of $G$,
 let $N$ be  a normal  subgroup of $G$ such that $H\cap N=1$.
 Let $p$ be a prime not dividing $|H|$.
  Let $\alpha \in \irr H$ be primitive.
 If $\alpha$ extends to some $p$-Brauer character $\beta$ of $G$,  
 then $\beta_N$ is $\beta(1)\nu$ for some linear  character $\nu \in \ibr N$.
 Moreover, there is an extension $\gamma \in \ibr G$
 of $\alpha$ that contains $N$ in its kernel.
 \end{lem}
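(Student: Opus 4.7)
The plan is to derive the first assertion from a Clifford analysis of $\mu := \beta|_{HN}$, using primitivity of $\alpha$ through a Mackey calculation and a coprimality of degrees, and then to obtain the second by reducing modulo $\ker\nu$ and constructing a suitable linear twist of $\beta$.

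For the first assertion, the key observation is that $\mu := \beta|_{HN}$ is irreducible in $\ibr{HN}$, because its restriction $\mu|_H = \alpha$ is irreducible. Applying Clifford theory in $HN$ to the normal subgroup $N$, one writes $\mu|_N = e(\phi_1 + \cdots + \phi_t)$, where $\{\phi_1, \ldots, \phi_t\}$ is the $HN$-orbit of some $\phi = \phi_1 \in \ibr N$ with inertia subgroup $T = H_0 N$, and $H_0 := T \cap H$ has index $t$ in $H$. The Clifford correspondence for Brauer characters gives $\mu = \psi^{HN}$ for some $\psi \in \ibr T$ above $\phi$, and Mackey, using $HT = HN$ and $H \cap T = H_0$, yields $\alpha = \mu|_H = (\psi|_{H_0})^H$. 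Primitivity of $\alpha$ forces $H_0 = H$, hence $t = 1$, and the same analysis applied in $G$ shows $\phi$ is $G$-invariant. The degree identity $\alpha(1) = e\phi(1)$, together with $\alpha(1) \mid |H|$, $\phi(1) \mid |N|$, and $\gcd(|H|, |N|) = 1$ (from the Hall hypothesis and $H \cap N = 1$), forces $\phi(1) = 1$. Setting $\nu := \phi$ proves the first assertion.

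For the second assertion, set $K := \ker\nu$. By $G$-invariance of $\nu$ one has $K \nor G$, while $K \le N$ gives $H \cap K = 1$, and $\beta|_K = \beta(1)\nu|_K$ is trivial, so $K \le \ker\beta$. Replacing $G$ by $G/K$, I may assume $\nu$ is faithful; then $N$ is cyclic and, since $\nu$ is $G$-invariant, $N \le Z(G)$, so $HN = H \times N$. In this reduced setting, the desired $\gamma$ is a linear twist $\gamma := \beta \cdot \lambda$, for $\lambda \in \ibr G$ a linear Brauer character with $\lambda|_H = 1$ and $\lambda|_N = \nu^{-1}$: then $\gamma|_H = \alpha$ and $N \le \ker\gamma$. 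To produce $\lambda$, view $\beta$ as a projective representation of $G/N$ whose cocycle class $[c] \in H^2(G/N, \mathbb{C}^\times)$ takes values in $\nu(N)$; since the restriction of this projective representation to $\bar H := HN/N \cong H$ lifts genuinely to the representation afforded by $\alpha$, $[c]$ restricts trivially to $\bar H$, which is a Hall subgroup of $G/N$. A corestriction-restriction argument then kills $[c]$ by $[G/N : \bar H]$, and combined with the fact that $[c]$ has values in a group of order dividing $|N|$, which is coprime to $|H|$, one concludes $[c] = 0$, producing $\lambda$ and hence $\gamma$.

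The main obstacle is the vanishing of the projective cocycle in the last step. The Hall-corestriction immediately annihilates $[c]$ by $[G/N : \bar H]$, but combining this with the bound on the order of the values of $[c]$ to force $[c] = 0$ takes care, and primitivity of $\alpha$ may need to be re-invoked. A cleaner alternative is induction on $|N|$: take a minimal normal subgroup $M$ of $G$ with $M \le N$, and split into two subcases. If $\nu|_M = 1_M$, then $M \le \ker\beta$ and one inducts on $G/M$. If $\nu|_M$ is faithful, then $M$ is cyclic of prime order and $M \le Z(G)$, and the linear twist can be produced directly using $\gcd(|M|, |H|) = 1$.
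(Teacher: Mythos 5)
Your reduction of the first assertion to the degree identity $\alpha(1)=e\phi(1)$ contains a fatal step: you assert that $\phi(1)$ divides $|N|$ for $\phi\in\ibr N$. This divisibility is a theorem for ordinary characters but is \emph{false} for Brauer characters: for example, $\SL(2,7)$ has an irreducible $7$-Brauer character of degree $5$, and $5\nmid 336$. Worse, the weaker statement you would actually need --- that a $G$-invariant $\phi\in\ibr N$ whose degree is coprime to $p|N|$ must be linear --- also fails in general (the same example has $\gcd(5,\,7\cdot 336)=1$). So after your (correct) Clifford--Mackey argument showing $\beta_N=e\phi$ with $\phi$ $G$-invariant, you cannot conclude $\phi(1)=1$ by degrees alone. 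This is precisely why the paper's proof takes a longer route: for each prime $q\neq p$ dividing $|N|$ it passes to the $p'$-group $U=HQ$ with $Q$ an $H$-invariant Sylow $q$-subgroup (there Brauer characters \emph{are} ordinary characters, so the degree-divisibility arguments are legitimate), uses primitivity to force the relevant linear $\lambda\in\irr Q$ to be $H$-invariant and hence $Q\subseteq\zent\beta$, and then disposes of the $p$-part of $N$ with Green's theorem, since $N/(\zent\beta\cap N)$ is a $p$-group. Some version of this two-step treatment is unavoidable; your shortcut does not close.

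The second assertion is also incomplete as written. The corestriction argument kills the obstruction class by $|G:HN|$, but $|G:HN|$ and $|N|$ are both $\pi'$-numbers and need not be coprime, so ``combining'' the two bounds does not give $[c]=0$; and in your inductive alternative, the case where $M\le\zent G$ and $\nu_M$ is faithful requires showing that $\nu_M^{-1}$ extends to a \emph{linear} Brauer character of $G$, which fails exactly when $M\le G'$ --- coprimality of $|M|$ and $|H|$ alone does not rule this out. The paper instead extends $\nu$ itself to $G$ via the local criterion (Theorem 8.29 of \cite{N1}): coprime extension handles the primes dividing $|H|$, and for each remaining prime $r$ one uses that $r\nmid\beta(1)$ together with the fact that the degrees of the characters of $R$ over $\nu$ divide the $r$-power $|R:N|$, so some constituent of $\beta_R$ extends $\nu$. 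That degree input --- that $\beta(1)=\alpha(1)$ divides $|H|$ and hence is coprime to $|G:H|$ --- is the ingredient missing from both of your proposed routes for the second part.
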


 \begin{proof}
 Let $\beta \in \ibr G$ such that $\beta_H=\alpha$. 
 We claim that $\beta_N=\beta(1) \nu$, for some linear $\nu \in \ibr N$. 
 Let $\mathcal X$ be an $F$-representation affording $\beta$.
 Let $\zent{\beta}$ be the subgroup $\{g \in G \mid \mathcal X(g)=\lambda_g I \}$, where $\lambda_g \in F$ is a scalar. 
  For each prime $q \ne p$ dividing $|N|$,
  by coprime action, there exists   an $H$-invariant Sylow $q$-subgroup $Q$ of $N$. Let $U=HQ$.
 Let $\chi=\beta_U \in \irr U$.  Let $\lambda \in \irr Q$ be under $\chi$.
 Then $\lambda(1)$ divides $\chi(1)$, which divides $|H|$. Since $\lambda(1)$ divides $|Q|$, we have that $\lambda$
 is linear. 
   By   the Clifford correspondence
 (Theorem 6.11 of \cite{I}),  there is $\tau \in \irr{U_\lambda}$ lying over $\lambda$  such that
 $$\tau^U=\chi \, .$$
 By Mackey, $(\tau_{H_\lambda})^H=\alpha$. Since $\alpha$ is primitive, then we conclude that 
 $H=H_\lambda$ and that $\lambda$ is $H$-invariant. Therefore $\beta_{Q}=\beta(1)\nu$, for some linear character $\nu \in \irr Q$, and we conclude that
 $Q \sbs \zent{\beta_U} \sbs \zent \beta$.
If $W=\zent \beta \cap N$, it then follows that $N/W$ is a $p$-group
and that $\beta_W=\beta(1)\delta$ for some linear character $\delta \in \ibr W$. By Green's theorem (8.11 of \cite{N1}), we have that $\beta_N=\beta(1)\nu$ for some linear $\nu \in \ibr N$, as claimed.

Next, we claim that $\nu$ extends to $G$.  By Theorem 8.29 of \cite{N1}, it
suffices to show that $\nu$ extends to $R$, where $R/N$ is
a Sylow $r$-subgroup of $G/N$ for all primes $r$. If $r$ divides $|H|$, this follows from Theorem 8.23 of \cite{N1}. Suppose that $r$ does not divide $|H|$.
The $r$ does not divide $\alpha(1)=\beta(1)$. Write $\beta_R=e_1 \delta_1 + \ldots + e_t\delta_r$,
  where $\delta_i \in \irr{R|\nu}$. By Theorem 8.30 of \cite{N1}, then notice that for  some  $i$ we necessarily have that 
  $\delta_i(1)/\nu(1)=1$. This proves the claim. Let $\mu \in \irr G$ be an extension of $\nu$. 
If $\pi$ is the set of primes dividing $|N|$, we may assume that $o(\mu)$ is a $\pi$-number, by considering the $\pi$-part $\mu_\pi$ of $\mu$. In particular,
 $\mu_H=1_H$.  Now, consider $\gamma=\beta\bar \mu  \in \ibr G$, which has $N$ in its kernel and extends $\alpha$.  
 \end{proof}
 
 \medskip
 
 If $G$ is $\pi$-separable, $H$ is a Hall $\pi$-subgroup and $N\nor G$ is a $\pi'$-subgroup, it is also not difficult to prove that
 if $\alpha \in \irr H$ extends to some $p$-Brauer character of $G$, then there is an extension containing $N$ in its kernel. 
 (See the proof of Lemma 2.1 of \cite{MN}.) Using this and  Lemma \ref{nor}, we could easily prove Theorem A in the case
 where $G$ is $\pi$-separable. 
 
 \medskip
 
 We shall use the following extension theorem.
 
 \begin{thm}\label{pext}
 Suppose that $N=S^n$ is a minimal non-abelian normal subgroup of $G$,
 where $S$ is simple,  and let $\tau \in \irr S$. If $\tau$ extends to  
 some $\beta \in \irr{{\rm Aut}(S)}$, then $\gamma=\tau \times \cdots \times \tau$ extends to
 some $\chi \in \irr G$. Furthermore, if $\tau$ is primitive, then $\chi$ is primitive. 
 \end{thm}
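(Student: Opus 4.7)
The plan is to build $\chi$ by pulling back an explicit wreath-product extension to $\Aut(N)$, and then to verify primitivity by a Mackey--Clifford argument whose only substantive input is that $\gamma$ itself is primitive in $N$.

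\emph{Construction.}  Because $S$ is non-abelian simple, $\zent N=1$, so $N\cap \cent GN=1$ and conjugation identifies $N$ with $\Inn(N)$ inside $\overline{G}:=G/\cent GN\leq \Aut(N)=\Aut(S)\wr \sym_n$.  Starting from a representation $\mathcal X$ of $\Aut(S)$ on $V$ affording $\beta$, I would define a representation $\mathcal Y\colon \Aut(N)\to \GL(V^{\otimes n})$ by letting $\Aut(S)^n$ act componentwise via $\mathcal X$ and $\sym_n$ permute the tensor factors.  This is the standard wreath-product representation; its character $\tilde\chi$ is irreducible of degree $\tau(1)^n$, extends $\beta^{\otimes n}$ on $\Aut(S)^n$, and hence extends $\gamma$ on $\Inn(N)\cong N$.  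Since $\tau$ is $\Aut(S)$-invariant and all tensor factors of $\gamma$ coincide, $\gamma$ is $\Aut(N)$-invariant; each irreducible constituent of $\tilde\chi|_{\overline G}$ therefore lies over $\gamma$ and has degree at least $\gamma(1)=\tilde\chi(1)$, so $\tilde\chi|_{\overline G}$ is already irreducible.  Pulling back along $G\twoheadrightarrow\overline G$ produces the required $\chi\in\irr G$ extending $\gamma$.

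\emph{Primitivity.}  Assume $\tau$ is primitive and suppose $\chi=\phi^G$ for some $\phi\in\irr U$ with $U<G$.  Applying Mackey's formula to $\chi|_N=\gamma$, which is irreducible, forces a single double coset: $G=UN$ and $\gamma=\eta^N$ for $\eta:=\phi|_{U\cap N}\in\irr{U\cap N}$.  Setting $M:=U\cap N$, it is enough to establish the auxiliary claim that \emph{$\tau\in\irr S$ primitive with $S$ non-abelian simple implies $\gamma=\tau^{\otimes n}$ is primitive}; granted this, $M=N\leq U$, and hence $U=UN=G$, a contradiction.

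\emph{Auxiliary claim and main obstacle.}  Suppose $\gamma=\eta^N$ with $\eta\in\irr M$, $M<N$, and use transitivity of induction to enlarge $M$ to a maximal subgroup of $N$.  For $S$ non-abelian simple, the maximal subgroups of $N=S^n$ are of two types: (I) $T\times S^{n-1}$ with $T<S$ maximal, and (II) a twisted diagonal $\Delta_\phi\times S^{n-2}$ with $\Delta_\phi=\{(s,\phi(s)):s\in S\}$ for some $\phi\in\Aut(S)$.  Writing $\eta$ as a tensor product of irreducibles on the direct factors of $M$, Type (I) forces $\tau=\eta_1^S$ for some $\eta_1\in\irr T$, contradicting primitivity of $\tau$.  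In Type (II), the relation $\Delta_\phi\cap(S\times 1)=1$ gives, via Mackey with a single double coset, that $(\eta_D)^{S\times S}|_{S\times 1}$ is a scalar multiple of the regular character of $S$, which cannot equal $\tau(1)\cdot\tau$.  This primitivity lemma---requiring both the classification of maximal subgroups of $S^n$ and the diagonal Mackey computation---is the main technical obstacle.
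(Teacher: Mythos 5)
Your proposal is correct and follows the same route as the paper's proof: you extend $\gamma$ by restricting the natural representation of $\Aut(N)=\Aut(S)\wr\sym_n$ on $V^{\otimes n}$ to $G/\cent GN$ and pulling back, and you then use Mackey's formula to reduce the primitivity of $\chi$ to the primitivity of $\gamma=\tau\times\cdots\times\tau$, exactly as in the paper. The only difference is that the paper quotes its two ingredients (Lemma 5 of \cite{BCLP} or Corollary 10.5 of \cite{N2} for the extension, and Theorem 4.5 of \cite{H} for the primitivity of $\gamma$), whereas you supply direct proofs; in particular your verification that $\tau^{\times n}$ is primitive, via the classification of maximal subgroups of $S^n$ (factorwise maximal versus twisted diagonal) together with the observation that inducing from a full diagonal restricts to a multiple of the regular character of a factor, is a correct self-contained substitute for the appeal to \cite{H}.
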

 
 \begin{proof}
 See,  for instance,  Lemma 5 of \cite{BCLP}, or Corollary 10.5 of \cite{N2}.
 We have that $\gamma$ is primitive by Theorem 4.5 of \cite{H}.
 By Mackey (see Problems 5.6 and 5.7 of \cite{I}), we have that $\chi$ is primitive. 
 \end{proof}

   We notice the following variation of  Lemma 4.2 of \cite{M}.
  
  \begin{lem} \label{l:primitve}  If $S$ is a finite nonabelian simple, group then there exists a primitive character $\chi \ne 1$ such
  that $\chi$ extends to $\mathrm{Aut}(S)$.
  \end{lem}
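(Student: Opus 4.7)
The plan is to argue via the classification of finite simple groups, exhibiting for each family a nontrivial irreducible character of $S$ that is primitive and extends to $\mathrm{Aut}(S)$. In each case I would identify a specific character and verify (i) that it is stable under, and indeed extends to, the full automorphism group, and (ii) that it is primitive, typically by comparing its degree to the minimum index of a proper subgroup of $S$, since any character induced from a proper subgroup $U<S$ has degree at least $[S:U]$.

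For the alternating groups $S=A_n$ with $n\ge 5$, I would take $\chi$ to be the restriction to $A_n$ of the irreducible character of $S_n$ labelled by the partition $(n-1,1)$, which has degree $n-1$. As that partition is not self-conjugate, the character restricts irreducibly, and by construction it extends back to $S_n$. For $n\ne 6$ this is already the full $\mathrm{Aut}(A_n)$, while for $n=6$ one inspects the ATLAS to locate a primitive character (e.g.\ of degree $9$) stable under $\mathrm{Out}(A_6)\cong C_2\times C_2$. Primitivity follows at once from the bound $n-1<n$, $n$ being the minimum index of a proper subgroup of $A_n$ for $n\ge 5$. The $26$ sporadic simple groups are handled by a direct finite verification with the ATLAS: in each case a small-degree constituent that is stable under all outer automorphisms and lies below the minimum proper-subgroup index can be read off.

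For a finite simple group $S$ of Lie type in defining characteristic $p$, I would take $\chi=\mathrm{St}_S$, the Steinberg character of degree $|S|_p$. By classical results of Steinberg, $\mathrm{St}_S$ is stable under all field, graph, and diagonal automorphisms and extends to $\mathrm{Aut}(S)$. To establish primitivity, I would use the fact that the restriction of $\mathrm{St}_S$ to a Sylow $p$-subgroup $U$ is the regular character of $U$, so that any subgroup from which $\mathrm{St}_S$ could be induced must contain $U$ and hence (by Borel--Tits) lie inside a parabolic subgroup; a Harish--Chandra analysis of induction from proper parabolics then excludes this possibility. The main obstacle is precisely this primitivity step for Lie type groups: for $\mathrm{PSL}_n(q)$ with $n\ge 3$, and related families, the degree $|S|_p$ already exceeds the minimum index of a proper subgroup, so a bare degree comparison fails and a genuinely structural argument is required. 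A secondary technical nuisance is dealing with the few small simple groups that have exceptional outer automorphism groups (e.g.\ $A_6$, $\mathrm{PSL}_3(4)$, $\mathrm{PSU}_4(3)$, $\mathrm{Sp}_4(2)'$), but each of these is a finite check.
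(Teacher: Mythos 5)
Your overall strategy coincides with the paper's: a CFSG case division, the degree $n-1$ character for ${\sf A}_n$ (degree $9$ for ${\sf A}_6$), an ATLAS check for the sporadic groups, and the Steinberg character in defining characteristic for the groups of Lie type. The alternating and sporadic cases are fine. The genuine gap is in your primitivity argument for the Steinberg character, and it is not a repairable detail: the claim you are trying to prove is false as stated. If $\mathrm{St}_S=\gamma^V$ for a proper subgroup $V$, the correct deduction from $\mathrm{St}_S(1)=|S|_p$ is that $|S:V|$ divides $|S|_p$, i.e.\ $V$ has index a \emph{power of} $p$ --- essentially the opposite of your claim that $V$ must contain a Sylow $p$-subgroup (if it did, $|S:V|$ would be prime to $p$ and dividing $|S|_p$, forcing $V=S$ and making the whole discussion vacuous). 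The restriction of $\mathrm{St}_S$ to a Sylow $p$-subgroup being regular does not force the inducing subgroup to contain that Sylow subgroup, so the Borel--Tits/parabolic reduction never gets off the ground. Worse, no Harish--Chandra analysis can ``exclude this possibility,'' because the Steinberg character genuinely is imprimitive in some cases: for $S=\mathrm{PSL}(3,2)\cong\mathrm{PSL}(2,7)$ in characteristic $2$, the degree $8$ Steinberg character is induced from a nontrivial linear character of the subgroup $7{:}3$ of index $8$, and $\mathrm{PSp}(4,3)$ in characteristic $3$ is likewise exceptional.

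The paper's route through this step is the one you need: from $|S:V|$ being a power of $p$ one invokes the classification of simple groups with a proper subgroup of prime-power index (\cite[Theorem A]{Se}, \cite[Theorem 1]{G}), which for a group of Lie type in its defining characteristic $p$ leaves only $\mathrm{PSL}(3,2)$, $\mathrm{PSp}(4,3)$ and $\mathrm{PSL}(2,11)$. For $\mathrm{PSL}(2,11)$ the index $11$ subgroup is ${\sf A}_5$, which is perfect, so no linear character can induce up to the degree $11$ Steinberg character and it is primitive after all; for the other two groups one must abandon the Steinberg character entirely and use a character of degree $6$ instead. Your proposal, as written, would assert primitivity of $\mathrm{St}_S$ uniformly and therefore proves something false for two groups; you need both the Seitz--Guralnick input and the substitute characters in the exceptional cases.
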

  
  \begin{proof}  If $S$ is sporadic, this follows by an easy inspection. In general,  the minimal degree character extends
  and this is primitive, except in the following cases: If $S=M_{12}$, we take the character of degree 45;
  if $S=J_2$, we take the character of degree 63;  if $S=J_3$, then we take the character of degree 324;
  if $S=He$, we take the character of degree 680; if  $S=HN$, we take the character of degree 760. 
  
   If $S={\sf A}_n$, $n \ne 6$, let $\chi$ be the character of degree $n-1$.  If $S={\sf A}_6$, then we take the character of degree 9.
  If $S$ is of Lie type in characteristic $p$,  then it is known
  that the Steinberg character extends to the full automorphism group.  Since the degree is a power of $p$, this character could
  only be imprimitive if $S$ has a subgroup of index a power of $p$.   This only happens for $S=\mathrm{PSL}(3,2)$,  $\mathrm{PSp}(4,3)$ or ${\rm PSL}(2,11)$
  (see \cite[Theorem A]{Se} and \cite[Theorem 1]{G}).   In the last case, the subgroups of index a power of $p$ is perfect, the Steinberg character is primitive. 
  In the remaining cases, we take a degree $6$ character.
       \end{proof} 

  \section{Almost Simple Groups}
  Our objective in this Section is to prove Theorem \ref{ccl} for almost simple groups $G$ with socle $N$. We start with the following.

  %
%
%
    \begin{lem}\label{l:abelian2}   Let $G$ be a finite group and $N \nor G$.   Let $H$ be a  Hall  subgroup  of
  $G$ and assume that  $Q=H \cap N$ is an abelian $q$-group.   Suppose that $N$ does not have a normal $q$-complement. Then there exists $a \in Q$ such that $a^G \cap Q \ne a^H$. 
  \end{lem}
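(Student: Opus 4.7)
I will prove the contrapositive: assuming $a^G\cap Q = a^H$ for every $a\in Q$, derive that $N$ has a normal $q$-complement. First, $Q$ is a full Sylow $q$-subgroup of $N$, since the Hall subgroup $H$ contains a Sylow $q$-subgroup of $G$ and intersecting with $N$ produces a $q$-subgroup whose order equals $|N|_q$.

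The first reduction is to replace $G$ by $G':=H\cdot N_N(Q)\le N_G(Q)$ and $N$ by $N':=N_N(Q)$. Standard checks give $H\cap N'=Q$, $G'=HN'$ with $N'\trianglelefteq G'$, and $H$ still Hall in $G'$; the hypothesis transfers because $a^{G'}\cap Q\subseteq a^G\cap Q=a^H\subseteq a^{G'}\cap Q$. By Burnside's normal $p$-complement theorem applied to the abelian Sylow subgroup $Q$ of $N$, it suffices to show $N_N(Q)=C_N(Q)$, i.e., $N'=C_{N'}(Q)$. Since $Q\trianglelefteq G'$, the transferred hypothesis simplifies to $a^{G'}=a^H$ for all $a\in Q$ (because $a^{G'}\subseteq Q$).

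The heart of the argument lives in $\Aut(Q)$. Set $\bar G:=G'/C_{G'}(Q)$ and its subgroups $\bar H$, $\bar N$ (images of $H$ and $N'$). The key arithmetic input is $(|\bar H|,|\bar N|)=1$: $|\bar H|$ divides $|H|$, while $|\bar N|$ divides $|N'|/|Q|$, which in turn divides $[G':H]=|N_N(Q)|/|Q|$ and hence $[G:H]$, coprime to $|H|$ by the Hall hypothesis. Combined with $\bar N\trianglelefteq\bar G=\bar H\bar N$, this yields a semidirect decomposition $\bar G=\bar N\rtimes\bar H$.

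The final step---where I expect the main conceptual obstacle---is to use this structure to force $\bar N=1$. Fix $a\in Q$. The equality $a^{\bar G}=a^{\bar H}$ yields $|\bar G_a|=(|\bar G|/|\bar H|)\cdot|\bar H_a|=|\bar N|\cdot|\bar H_a|$, and projecting $\bar G_a$ onto the quotient $\bar G/\bar N\cong\bar H$ produces a subgroup of $\bar H$ of order $|\bar G_a|/|\bar N_a|=|a^{\bar N}|\cdot|\bar H_a|$. So $|a^{\bar N}|$ divides $|\bar H|/|\bar H_a|$; but it also divides $|\bar N|$, which is coprime to $|\bar H|$. Hence $|a^{\bar N}|=1$ for every $a\in Q$, and $\bar N$, being faithful on $Q$, is trivial. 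Thus $N'=C_{N'}(Q)$, and Burnside's theorem delivers the normal $q$-complement of $N$, contradicting our hypothesis. The delicate point throughout is pinning down the coprimality $(|\bar H|,|\bar N|)=1$ and funneling it through the projection-to-$\bar H$ to collapse $|a^{\bar N}|$ to $1$; the Hall/Frattini reductions and orbit--stabilizer bookkeeping around it are routine.
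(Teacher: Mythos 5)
Your proof is correct and takes essentially the same route as the paper's: both argue the contrapositive by noting $Q\in\syl qN$, restricting to the normalizer of $Q$, using the Hall coprimality to force $\norm NQ$ to centralize $Q$, and finishing with Burnside's normal $q$-complement theorem. The only cosmetic difference is that you run the coprimality/orbit--stabilizer bookkeeping inside $\Aut(Q)$ after factoring out $\cent{G'}{Q}$, whereas the paper works directly with the factorization $\norm GQ=H\cent La$ and index divisibility.
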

  
  \begin{proof} Notice that $Q=H \cap N \in \syl qN$ using that $H$ is a Hall subgroup. Write $L=\norm GQ$.
    Suppose  that $a^G \cap Q=a^H$ for every $a \in Q$.  Fix $a \in Q$. If $l \in L$, then there is $h \in H$ such that
  $a^{lh^{-1}}=a$. Thus $L=H\cent La$.   Hence $|L: \cent La|$ is coprime with $\norm NQ/Q$. 
  Since $\norm NQ \nor L$, we have that $\norm NQ \sbs \cent La$.  Therefore $Q \sbs \zent{\norm NQ}$.
  By Burnside's theorem, we have that $N$ has a normal $q$-complement, but this is not possible.
  \end{proof}

  We first consider the case where $N$ is a sporadic group, with a slightly  more general hypothesis.

  \begin{lem} \label{l:sporadic} 
  Let $G$ be an almost simple group, with socle $N$,  a sporadic simple group.  
  Suppose that $H$ is a 
  subgroup of $G$ of odd index  and $H \cap N$ is  a minimal normal subgroup of $H$
  with $H<HN$.  Then  $G=N$ and either $H$ has odd prime order or 
  $G=M_{23}$ and $H=M_{22}$.  In both cases, $H$ is not $c$-closed in $G$.
  \end{lem}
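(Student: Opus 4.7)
The plan is to pin down the pair $(G,H)$ by combining the odd-index hypothesis with the minimality condition, and then verify that $c$-closure fails in each surviving case. The key inputs are the classification of subgroups of odd index in the sporadic simple groups (available from the Atlas, in the spirit of Liebeck--Saxl), together with the fact that a minimal normal subgroup of $H$ must be characteristically simple.

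The first step is to observe that $|N:H\cap N|$ divides the odd number $|G:H|$, so $H\cap N$ contains a Sylow $2$-subgroup of $N$. Being characteristically simple, $H\cap N$ is either elementary abelian --- in which case it must be an elementary abelian Sylow $2$-subgroup of $N$ --- or a direct product $T^k$ of isomorphic nonabelian simple groups. Among sporadic simple groups, only $J_1$ has an elementary abelian Sylow $2$-subgroup; since $\Out(J_1)=1$ this forces $G=N=J_1$ with $H\leq\norm{J_1}{P}\cong 2^3\!:\!(7\!:\!3)$ and $H/P$ acting irreducibly on $P=H\cap N$, yielding a very short explicit list of possibilities to be checked against the stated conclusion. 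In the $T^k$ case, I would traverse each sporadic group's table of subgroups of odd index; minimality forces $H$ to permute the factors $T_i$ transitively, and combined with the rigidity of sporadic subgroup lattices, only a handful of candidates survives, the prototypical one being $M_{22}\leq M_{23}$.

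For the $G=N$ half of the conclusion, I would argue that $G>N$ would give $HN=G$ (since $|G:HN|$ is an odd divisor of $|G/N|$, which has order at most $2$ for the sporadic $N$'s in play), forcing $H/(H\cap N)\cong G/N$ of order $2$, which clashes with the minimal-normality of $H\cap N$ in $H$. For each remaining $(G,H)$, I would exhibit elements witnessing failure of $c$-closure: for $(M_{23},M_{22})$ this follows from classical fusion, e.g., two $M_{22}$-classes of order $11$ merging into a single $M_{23}$-class, visible from the Atlas; in the elementary abelian subcase, Lemma \ref{l:abelian2} delivers the failure at once, since the Sylow $2$-subgroup is abelian and $N$ is simple and so has no normal $2$-complement. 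The main obstacle is the sporadic case analysis --- ensuring that no sporadic outside the stated list admits a configuration satisfying the hypothesis, and carrying out the fusion-based $c$-closure check uniformly across the surviving cases.
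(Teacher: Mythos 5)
Your outline---split on whether $H\cap N$ is elementary abelian or a product of nonabelian simple groups, reduce the latter to an inspection of odd-index subgroups of sporadic groups, and finish with a fusion check---matches the paper's, but the decisive steps are either missing or wrong. The main gap is that your ``traversal of each sporadic group's table of subgroups of odd index'' is not an available computation: only the maximal subgroups are tabulated. What makes the inspection finite in the paper is the following filter: if $L=H\cap N$ is a product of nonabelian simple groups and $L\le M$ with $M$ maximal in $N$, then $L$ contains a Sylow $2$-subgroup of $M$, so $\oh 2M\le\oh 2L=1$, and $L$ perfect of odd index in $M$ forces $M$ modulo the last term of its derived series to have odd order. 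Running the odd-index maximal subgroups of the sporadic groups through these two conditions leaves only $M_{22}<M_{23}$ and $M_{22},\PSU(4,3)<McL$, and a recursion into $M$ (every proper odd-index subgroup of $M_{22}$ or $\PSU(4,3)$ has nontrivial ${\bf O}_2$) gives $L=M$. Your appeal to ``rigidity of sporadic subgroup lattices'' supplies none of this, and your ``handful of candidates'' silently drops the two $McL$ configurations that the paper's own inspection produces and must dispose of.

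Two further steps fail outright. Your witness for the failure of $c$-closure of $M_{22}$ in $M_{23}$ is wrong: the Sylow $11$-normalizer is $11{:}5$ in both groups, so by Burnside's fusion argument the two classes of elements of order $11$ in $M_{22}$ do not merge in $M_{23}$; a correct witness is order $4$, where the classes $4A$ and $4B$ of $M_{22}$ fuse into the unique class of order $4$ of $M_{23}$. And your derivation of $G=N$ is invalid: $H/(H\cap N)\cong G/N$ of order $2$ is entirely compatible with $H\cap N$ being a minimal normal subgroup of $H$ (consider ${\sf A}_n\nor{\sf S}_n$), so there is no ``clash''; excluding $G>N$ requires case-by-case information about whether the outer automorphism normalizes a conjugate of $H\cap N$. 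The elementary abelian branch is salvageable, though more directly than via $J_1$: since $|G/N|\le 2$, an abelian $H\cap N$ of odd index forces $H$ to be a $2$-group, whose minimal normal subgroups are central of order $2$, while a Sylow $2$-subgroup of a sporadic group has order at least $8$.
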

   
   \begin{proof}  If $H \cap N$ is a $2$-group, then $H$ is a $2$-group and since $|H \cap N| \ge 4$, $H \cap N$ is not a minimal normal
   subgroup of $H$.  If $H \cap N$ is an elementary abelian $p$-group with $p$ odd, then $H \le N$ and so $H$ is not $c$-closed. 
      
   So we may assume that $L=H \cap N$ is a direct product of nonabelian simple groups.    Let $M$ be a maximal subgroup of $N$.  
  Then $\oh 2M \le \oh 2L =1$.   Since   $L$  is a perfect  Hall subgroup,  $M$ modulo
 the last term of its derived series has odd order.  
  
 The maximal subgroups of sporadic are all given in \cite{Wi} aside from the case of $N=Fi_{24}'$.  See \cite{LW} for this case.
  Aschbacher also studies the overgroups of Sylow $2$-subgroups  \cite{As}.  
  
  By inspection of the maximal subgroups of odd index in a sporadic simple group, the only cases where $\oh 2M =1$ and $M/[M,M]$
  has odd order are
  $N=M_{23}$ and $M=M_{22}$ or $N=McL$ and $M=M_{22}$ or ${\rm PSU}(4,3)$.    All the proper subgroups of odd index
  of $M_{22}$ and ${\mathrm PSU}(4.,3)$  have nontrivial $\bold{O}_2$ and so $H \cap N$ is maximal in $M$.  Checking the character tables 
  shows these subgroups are not $c$-closed. 
     \end{proof}

  We next consider symmetric and alternating groups.   
  
  \begin{lem}  \label{l:alt}  Let $N={\sf A}_n$ with $n > 4$.  Assume that $G=HN \le {\sf S}_n$ and $H$ is a proper subgroup of $G$
  containing a Sylow $2$-subgroup of $G$.
  \begin{enumerate}
  \item If  any two involutions of $H \cap {\sf A}_n$ conjugate in $G$ are conjugate in $H$, then 
  $H= {\sf A}_{n-1}$ or 
  ${\sf S}_{n-1}$ with $n$ odd. 
  \item If $H$ satisfies (i) and is a Hall subgroup, then $H= {\sf A}_{n-1}$ or 
  ${\sf S}_{n-1}$ with $n$ prime.  
  \item If   any two  ${\sf A}_n$ conjugate elements of $H \cap {\sf A}_n$ are $H$-conjugate, then $H = {\sf S}_{n-1}$.  
  \end{enumerate}
  \end{lem}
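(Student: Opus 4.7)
I attack part (i) by analyzing the orbit structure of $H$ on $\{1,\ldots,n\}$; parts (ii) and (iii) will then follow quickly. Let $T \le H$ be a Sylow $2$-subgroup of $G$. The $T$-orbits coincide with the standard Sylow $2$-orbits in ${\sf S}_n$: their sizes are the distinct powers of $2$ appearing in the binary expansion of $n$, and each $H$-orbit is a union of $T$-orbits.

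The core step rules out any $H$-orbit decomposition with two $H$-orbits $O, O'$ satisfying $|O| \ge 4$ and $|O'| \ge 2$. Since $|O|$ is a sum of distinct powers of $2$ totalling at least $4$, it contains a $T$-orbit of size $\ge 4$, so $T$ contains a double transposition $a$ with support in $O$. Similarly, since each of $O, O'$ contains a $T$-orbit of size $\ge 2$, $T$ contains a transposition $s_1$ with support in $O$ and a transposition $s_2$ with support in $O'$; let $b = s_1 s_2 \in T \cap {\sf A}_n$. Then $a, b \in H \cap {\sf A}_n$ have the same cycle type $(2,2,1^{n-4})$, hence are ${\sf A}_n$-conjugate, but $H$-conjugation preserves the distribution of support across $H$-orbits ($a$ has $4$ support points in $O$, while $b$ has $2$ in each of $O, O'$), so $a$ and $b$ cannot be $H$-conjugate---contradicting hypothesis (i). Because the $T$-orbits have pairwise distinct sizes, the only $H$-orbit structures surviving for $n>4$ are: (A) $H$ is transitive on $\{1,\ldots,n\}$, or (B) one orbit of size $n-1$ together with one fixed point, which forces $n$ odd (so that $1$ and $n-1$ have disjoint binary expansions).

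For case (A) with $H < G$, imprimitive transitive subgroups $H \le {\sf S}_b \wr {\sf S}_{n/b}$ with $b > 1$ are ruled out by a similar argument: base versus non-base double transpositions in ${\sf A}_n$ have the same ${\sf S}_n$-cycle type but cannot be $H$-conjugate (the base is normal). The few primitive odd-index subgroups other than ${\sf A}_n, {\sf S}_n$ (by Liebeck--Praeger--Saxl) are handled case-by-case. In case (B), $H \le {\sf S}_{n-1}$ is transitive on $n-1$ points, and $|{\sf S}_n|_2 = |{\sf S}_{n-1}|_2$ (as $n$ is odd) ensures $H$ contains a Sylow $2$-subgroup of ${\sf S}_{n-1}$; the hypothesis transfers verbatim, since involution classes never split in ${\sf A}_m$, so ${\sf A}_n$-classes restricted to ${\sf A}_{n-1}$ coincide with ${\sf A}_{n-1}$-classes. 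Applying (A) inside ${\sf S}_{n-1}$ then forces $H \in \{{\sf A}_{n-1}, {\sf S}_{n-1}\}$. Part (ii) is immediate: $|G:H|=n$, and the Hall condition $\gcd(n,(n-1)!)=1$ holds iff $n$ is prime. For part (iii), (iii) implies (i), so $H \in \{{\sf A}_{n-1}, {\sf S}_{n-1}\}$; to eliminate ${\sf A}_{n-1}$, take an $(n-2)$-cycle $a \in {\sf A}_{n-1}$ (even, as $n-2$ is odd): its cycle type $(n-2,1)$ in ${\sf A}_{n-1}$ has distinct odd parts and splits into two ${\sf A}_{n-1}$-classes, while in ${\sf A}_n$ its cycle type $(n-2,1,1)$ has a repeated part and does not split, so two ${\sf A}_{n-1}$-class representatives are ${\sf A}_n$-conjugate but not ${\sf A}_{n-1}$-conjugate, contradicting (iii). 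The main obstacle is case (A), which rests on classification results for transitive odd-index subgroups of symmetric groups.
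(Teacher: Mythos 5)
Your overall strategy coincides with the paper's: rule out orbit decompositions via pairs of $G$-conjugate, non-$H$-conjugate involutions, reduce the two-orbit case to the transitive case one degree down, and finish (ii) by the Hall condition and (iii) by the splitting/fusion of the $(n-2)$-cycle classes. Your treatment of the intransitive case, of part (ii), and of part (iii) is correct and matches the paper. But the transitive case, which you yourself flag as "the main obstacle," contains genuine gaps.

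First, your imprimitive argument fails as stated for block size $d\ge 3$. A double transposition moves only four points, so it cannot interchange two blocks of size $d\ge 3$; for such $d$ \emph{every} double transposition lies in the kernel of the action on blocks, and your "base versus non-base" dichotomy produces only one of the two required elements. The paper's proof instead distinguishes two elements of the base by the number of blocks meeting their support: for $d\ge 4$ it compares a double transposition supported in one block with one spread over two blocks; for $d=3$ no double transposition fits in a single block, so it must pass to involutions moving exactly $8$ points (one inside the base, one swapping two blocks and acting in a third), which forces $n\ge 12$ and leaves $n=9$ as a genuine exception that is eliminated separately because ${\sf S}_3\wr{\sf S}_3$ has $2$-part $2^4<2^7=|{\sf S}_9|_2$ and so cannot contain a Sylow $2$-subgroup. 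None of this is recoverable from your sketch. Second, the primitive transitive case is left as a promissory note ("handled case-by-case" from a classification you do not quote), with no list of candidates and no verification; this is exactly the point where the proof cannot be checked. (The paper handles $n\le 8$ by inspection and relies on the intransitive/imprimitive analysis for larger $n$; if you invoke the odd-degree classification, the correct reference is Liebeck--Saxl, and you must actually exhibit that no primitive proper subgroup of odd index containing a Sylow $2$-subgroup survives for $n>8$.) A third, minor, slip: when $G={\sf A}_n$ your Sylow $2$-subgroup $T$ contains no transpositions, so $s_1,s_2$ should be taken in a Sylow $2$-subgroup of ${\sf S}_n$ whose intersection with ${\sf A}_n$ lies in $H$; only the even products $a$ and $s_1s_2$ need to lie in $H\cap{\sf A}_n$, and they do, so this is repairable but should be said correctly.
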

  
  \begin{proof} We first prove (i).   We induct on $n$.  If $n \le 8$, this follows by inspection.  First suppose that $H$ is not transitive.
  If $H$ has an invariant subset $\Gamma$ of size $d$ with $2 \le d  \le n/2$, then $H$ contains two involutions each 
  moving exactly $4$ points  -- one 
  trivial on $\Gamma$ and one nontrivial on $\Gamma$ and its complement  and so the result holds.
  
  Thus $H$ has precisely $2$ orbits of sizes $1$ and $n-1$ and so $n$ is odd.  Note that we can work in $G \cap {\sf S}_{n-1}$
  and by induction there are no examples.
  
  Suppose $H$ is transitive but not primitive and let $d$ be the size of a block.  If $d \ge 4$, there are two involutions in $H$
  moving exactly $4$ points with one non-trivial on two blocks and one trivial on all but one block and the result holds.
  If $d=2$, there are two such involutions with one fixing all blocks and another  switching two blocks.  If $d=3$, then since
  $n > 8$, we similarly find two involutions moving exactly $8$ points acting differently on blocks unless $n=9$.  In that case
  the stabilizer of the blocks does not contain a Sylow $2$-subgroup.
  
  Clearly (ii) follows.  Now we prove (iii).   By (i) we  may assume  $H={\sf A}_{n-1}$ with $n$ odd.  Let 
  $x \in H$ be an $n-2$ cycle.     Then there exists $x' \in H$ another $n-2$ cycle not conjugate to $x \in H$.  Since
  $x$ commutes with a transposition in ${\sf S}_n$, $x$ and $x'$ are conjugate in ${\sf A}_n$.
  \end{proof}

  In the one family above where $N={\sf A}_n$, $G=HN$, $H$ is a  subgroup of odd index and $H \cap N$ is a minimal normal subgroup of
  $H$, we need to prove the existence of a primitive irreducible character of $H \cap N$ that extends to $H$ but not to $N$.
  So $H = {\sf S}_{n-1}$ and $G={\sf S}_n$ with $n > 5$ odd.  The only prime  dividing $|G|$ but not $|H|$ is $n$ if $n$ is prime.   Aside
  from that case,  we can take the irreducible representation of $H \cap N$ of degree $n-2$.  In the case of characteristic $n$,
  we need to produce a different character.  We pick a particular one but in fact the argument below shows that there are many such.
  
  \begin{lem} \label{l:js}   Let $n > 5$ be a prime.  Let $\chi$ be the irreducible character of ${\sf A}_{n-1}$ corresponding to the partition
  $(n-3, 2)$.  Then $\chi$ is primitive,  extends to a
  complex character of $\mathrm{Aut}({\sf A}_{n-1})$ and does not extend to an irreducible $n$-Brauer character of
  ${\sf A}_n$.  
 \end{lem}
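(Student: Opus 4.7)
The partition $(n-3,2)$ has conjugate $(2,2,1^{n-5})$, which differs from $(n-3,2)$ for every $n>5$; hence $\chi^{(n-3,2)}\in\irr{{\sf S}_{n-1}}$ restricts irreducibly to $\chi\in\irr{{\sf A}_{n-1}}$, and the hook length formula gives $\chi(1)=(n-1)(n-4)/2$. I would treat the three assertions (primitivity, extension to $\mathrm{Aut}({\sf A}_{n-1})$, and non-extension to ${\sf A}_n$) in that order.

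For primitivity, suppose $\chi=\psi^{{\sf A}_{n-1}}$ for some $\psi\in\irr{U}$ with $U<{\sf A}_{n-1}$. Then $[{\sf A}_{n-1}:U]$ divides $(n-1)(n-4)/2$ and is at least $n-1$, the minimum index of a proper subgroup of ${\sf A}_{n-1}$ when $n-1\ge5$. The unique index-$(n-1)$ subgroup ${\sf A}_{n-2}$ is ruled out because $\chi(1)/(n-1)=(n-4)/2$ is not an integer for $n$ odd, and any other maximal subgroup (subset stabilizer, imprimitive wreath-product stabilizer, or primitive maximal) has index that either fails to divide $\chi(1)$ or exceeds it, as a short case check using the classification of maximal subgroups of ${\sf A}_{n-1}$ confirms. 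For extension to $\mathrm{Aut}({\sf A}_{n-1})$: if $n>7$, then $\mathrm{Aut}({\sf A}_{n-1})={\sf S}_{n-1}$ and $\chi^{(n-3,2)}$ itself extends $\chi$; for $n=7$, the unique degree-$9$ character of ${\sf A}_6$ extends to $\mathrm{Aut}({\sf A}_6)=\mathrm{P}\Gamma\mathrm{L}_2(9)$ by inspection of the ATLAS.

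The heart of the argument is the non-extension. I plan to prove the stronger statement that ${\sf A}_n$ has no irreducible $n$-Brauer character at all of degree $\chi(1)$, keyed on the identity
\[
\chi(1)=\frac{(n-1)(n-4)}{2}=\binom{n-2}{2}-1.
\]
Since $|{\sf A}_n|_n=n$, the principal $n$-block of ${\sf A}_n$ has cyclic defect group of order $n$; its ordinary characters are the ${\sf A}_n$-characters arising from the hook partitions of $n$ (with the self-conjugate hook $((n+1)/2,1^{(n-1)/2})$ contributing two characters and all other hooks pairing with their conjugates), and the Brauer tree is a straight line with the self-conjugate hook at the exceptional vertex. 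Starting from the trivial character at one end of the line and running Pascal's identity forward, the $(n-1)/2$ irreducible Brauer characters of this block turn out to have degrees $\binom{n-2}{k}$ for $k=0,1,\ldots,(n-3)/2$. All other $n$-blocks of ${\sf A}_n$ have defect zero, so their irreducible Brauer characters have degree divisible by $n$. Now $\binom{n-2}{2}-1$ is not of the form $\binom{n-2}{k}$: for $k=0,1,2$ each alternative reduces to a polynomial identity in $n$ with no admissible integer root, and for $k\ge3$ (with $k\le(n-3)/2$, so $n\ge11$) one has $\binom{n-2}{k}\ge\binom{n-2}{3}\ge\binom{n-2}{2}>\binom{n-2}{2}-1$. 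Finally $\chi(1)\equiv 2\pmod n$ shows $n\nmid\chi(1)$. Consequently no irreducible $n$-Brauer character of ${\sf A}_n$ has degree $\chi(1)$, and $\chi$ admits no such extension.

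The main obstacle will be justifying the shape of the Brauer tree of the principal $n$-block of ${\sf A}_n$; this is classical and can be cited from a standard source (e.g.\ James--Kerber or Navarro's book on characters and blocks).
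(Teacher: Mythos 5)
Your proof is correct, and for the crucial non-extension claim it takes a genuinely different route from the paper's. The primitivity and extension-to-$\mathrm{Aut}({\sf A}_{n-1})$ parts agree in substance with the paper (which simply asserts both "by considering the degree"), and your index/divisibility count is a sound way to make the primitivity claim precise. For the non-extension, the paper argues via branching: by James's theorem \cite{Ja} (or Kleshchev's modular branching rules) the only irreducible $n$-modular ${\sf A}_n$-module that could restrict to $\chi$ is $D^{(n-2,2)}$, and the Brundan--Kleshchev solution of the Jantzen--Seitz conjecture \cite{BK} shows that $D^{(n-2,2)}$ does not restrict irreducibly. You instead prove the stronger statement that ${\sf A}_n$ has \emph{no} irreducible $n$-Brauer character of degree $\binom{n-2}{2}-1$ whatsoever, exploiting that $n$ exactly divides $|{\sf A}_n|$: the unique positive-defect block is the principal one, its Brauer tree is the classical folded line on the hook characters (Peel; James--Kerber), the edge degrees $\binom{n-2}{k}$ for $0\le k\le (n-3)/2$ drop out of Pascal's identity, and the remaining arithmetic is immediate. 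Your route buys a stronger conclusion from older and more elementary inputs (cyclic-defect theory plus the principal-block Brauer tree of ${\sf S}_n$ and ${\sf A}_n$ in characteristic $n$), whereas the paper's buys brevity at the price of the Jantzen--Seitz machinery. The one point you should nail down explicitly, beyond citing the tree for ${\sf S}_n$, is that none of the $(n-1)$ principal-block Brauer characters of ${\sf S}_n$ splits on restriction to ${\sf A}_n$ (so the ${\sf A}_n$-degrees really are $\binom{n-2}{k}$ and not halves thereof); this follows, for instance, from the fact that the principal block of ${\sf A}_n$ has exactly $e=|\norm{{\sf A}_n}{D}/\cent{{\sf A}_n}{D}|=(n-1)/2$ irreducible Brauer characters for $D$ generated by an $n$-cycle, which forces the $(n-1)/2$ sign-twist pairs each to restrict irreducibly.
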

 
 \begin{proof}  Clearly $\chi$ extends to $\mathrm{Aut}(G)$ (if $n=7$, then  one checks the modular character table).   By considering
 the degree, it is clear it is primitive. 
  Suppose $\chi$ extends to an irreducible module $D^{\lambda}$ in characteristic $n$ for $N$.   By Kleshchev's branching rules
  or by a result of James \cite[Theorem C]{Ja} the only possibility is that $\lambda = (n-2,2)$.  By the solution of the Jantzen-Seitz
  conjecture \cite{BK},  $D^{(n-2, 2)}$ does not restrict to an irreducible ${\sf S}_{n-1}$ module.
 \end{proof}

  \begin{lem}\label{lie2}
   Let $G=HN$ be an almost simple group with socle $N$, a finite group of Lie type over the field of $q$ elements and $H$ a proper  subgroup of
  $G$ of odd index..   Assume that $R=H \cap N$ is a minimal normal subgroup of $H$.  Then one of the following holds:
  \begin{enumerate}
  \item   $N=\mathrm{PSL}(2,q)$,  $q \equiv \pm 1 \pmod 5$,  $q \equiv \pm 3 \pmod 8$ and $R= {\sf A}_5$; or
  \item   $N=\Omega(7,q)$ with $q \equiv \pm 3 \pmod 8$ and $R =\Omega(7,2)$; or 
   \item   $N=\Omega^+(8,q)$ with $q \equiv \pm 3 \pmod 8$  and $R =\Omega^+(8,2)$.
   \item   $R=G(q_0)$ where $q$ is odd and  $q=q_0^c$ for some odd $c$.
   \end{enumerate} 
   In all cases, there exists a primitive irreducible character $\gamma$ of $R$ that extends to a complex character of
    $\mathrm{Aut}(R)$ and does  not extend to an irreducible complex character of  $N$.
   \end{lem}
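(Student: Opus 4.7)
The plan is to split the proof into a classification step, identifying the possible pairs $(N,R)$, and a character-construction step, exhibiting the desired $\gamma$ in each case.

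For the classification, I would apply the Liebeck--Saxl theorem on subgroups of odd index in finite simple groups (with Kleidman's refinements in low rank and exceptional types). Since $R = H\cap N$ has odd index in $N$ and is a minimal normal subgroup of $H$, it is either elementary abelian or a direct product of isomorphic nonabelian simple groups. The abelian case is ruled out by a Sylow analysis together with the requirement that a nontrivial primitive character of $R$ extend to $\Aut(R)$. The remaining candidates in the Liebeck--Saxl list, constrained by the minimality of $R$ in $H$ and by $H/\bfC_H(R)$ embedding in $\Aut(R)$, yield precisely the four configurations of the statement: the generic subfield family of (4), and the three exceptional embeddings of ${\sf A}_5$, $\Omega(7,2)$, and $\Omega^+(8,2)$, each subject to $q\equiv\pm 3\pmod 8$ so that a Sylow $2$-subgroup of $N$ is contained in $R$.

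For the construction of $\gamma$, I would use Lemma \ref{l:primitve} to produce a primitive character of $R$ extending to $\Aut(R)$, and then verify that no irreducible character of $N$ has the same degree. In case (4), I would pick the low-degree candidate from Lemma \ref{l:primitve} (the Steinberg character, or the minimal faithful representation in the few cases where Steinberg is imprimitive), whose degree is a polynomial in $q_0$ of bounded degree determined by the type of $G$; the Landazuri--Seitz--Zalesski lower bound (refined by Tiep and L\"ubeck) gives that every nontrivial character of $N = G(q_0^c)$ has degree strictly larger for $c\geq 3$ odd, ruling out extension by a degree comparison alone.

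For the sporadic cases (1)--(3), I would verify the claim by direct character-table inspection: in (1) the degree-$5$ character of ${\sf A}_5$ works, since the irreducible degrees of $\PSL(2,q)$ are $1$, $q$, $(q\pm 1)/2$, $q\pm 1$, and none of these equals $5$ for admissible $q\geq 11$; cases (2) and (3) follow analogously from the ATLAS data for $\Omega(7,2)\cong\Sp(6,2)$ and $\Omega^+(8,2)$ against the known character degrees of $\Omega(7,q)$ and $\Omega^+(8,q)$. The main obstacle is the classification: sifting the Liebeck--Saxl list for configurations compatible with the minimality and extension hypotheses, ruling out products of several isomorphic simple factors in $R$, and verifying the Sylow $2$-congruences in each exceptional case requires careful bookkeeping, though no new ideas beyond the classical tools.
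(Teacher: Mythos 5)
Your overall architecture (Liebeck--Saxl classification of odd-index subgroups, then a degree comparison to block extension to $N$) matches the paper's, but the character-construction step has two concrete failures. First, in case (1) the degree-$5$ character of ${\sf A}_5$ is the wrong choice: it is induced from a nontrivial linear character of ${\sf A}_4$, hence \emph{imprimitive}, and worse, for the admissible value $q=11$ (note $11\equiv 1 \pmod 5$ and $11 \equiv 3\pmod 8$) the group $\PSL(2,11)$ has two irreducible characters of degree $(q-1)/2=5$, and these restrict irreducibly to the maximal subgroup ${\sf A}_5$ of index $11$ -- so the degree-$5$ character of ${\sf A}_5$ \emph{does} extend to $N$ in that case. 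Your claim that no degree of $\PSL(2,q)$ equals $5$ for admissible $q\ge 11$ is false precisely at $q=11$. The paper instead takes the degree-$4$ character (primitive, since ${\sf A}_5$ has no subgroup of index $2$ or $4$), which lies below the Seitz--Zalesski minimal-degree bound for $\PSL(2,q)$, $q\ge 11$.

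Second, and more seriously, your treatment of the subfield case (4) by a bare degree comparison does not go through. The Steinberg character of $R=G(q_0)$ has degree $q_0^{|\Phi^+|}$, which grows quadratically in the rank in the exponent, whereas the Landazuri--Seitz--Zalesski lower bound for $N=G(q_0^c)$ is roughly $q_0^{c\cdot O(\mathrm{rank})}$; already for $\mathrm{PSL}(n,q_0^3)$ with $n\ge 6$ the Steinberg degree $q_0^{n(n-1)/2}$ \emph{exceeds} the minimal degree $\approx q_0^{3(n-1)}$ of $N$, so "too small to extend" is simply unavailable. What is needed is not a lower bound but the classification of irreducible characters of quasi-simple groups whose degree is a power of the defining prime (Malle--Zalesskii \cite{MZ}), which is what the paper invokes to show $N$ has no irreducible character of degree $q_0^{|\Phi^+|}$ at all. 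A smaller issue in the same case: your fallback of the "minimal faithful representation" when the Steinberg character is imprimitive also fails, e.g.\ the two degree-$3$ characters of $\PSL(2,7)$ are interchanged by the outer automorphism and so do not extend to $\mathrm{Aut}(R)$; the paper uses the degree-$6$ character there. Finally, be careful not to use the extension conclusion to drive the classification of $(N,R)$ -- that part must rest only on the odd-index and minimal-normality hypotheses (the abelian possibility for $R$ is eliminated elsewhere in the paper, via Lemma \ref{l:abelian2}).
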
 
   
   \begin{proof}   We use the classification of maximal subgroups of odd index \cite{LSa}.   Let $M$ be a maximal subgroup
   of $N$ containing $R$.  Since $O_2(R)=1$ amd $R/[R,R]$ has odd order, the same is true for $M$.  By the main result of
   \cite{LSa}, the only possibilites are for $R$ are as listed above.   If $R$ is proper in $M$, then we must be in the last case
   the result follows by induction. 
   
    The last fact follows  in the first three cases by taking  $\gamma$ of degree $4, 7$ or $28$ respectively.  
  These do extend to the automorphism group and by \cite{SZ} are too small to extend to $N$.
  
  In the last case aside from the possibility that $R=PSL(2,7)$ or $PSp(4,3)$,
 let $\chi$ be the Steinberg character0 of $R$.  Then $\chi$ is primitive
  as in the proof of Lemma \ref{prim}.  By the main result of \cite{MZ}, $\chi$ does not extend to $N$.  If the remaining cases, let
  $\chi$ be the unique irreducible character of degree $6$.
  \end{proof}

   \begin{lem}\label{lie}
   Let $G=HN$ be an almost simple group with socle $N$, a finite group of Lie type over the field of $q$ elements and $H$ a proper Hall subgroup of
  $G$ of odd index.   Let $p$ be a prime not dividing $|H|$.  Assume that $R=H \cap N$ is a minimal normal subgroup of $H$.  Then one of the following holds:
  \begin{enumerate}
  \item   $N=\mathrm{PSL}(2,q)$,  $q \equiv \pm 1 \pmod 5$,  $q \equiv \pm 3 \pmod 8$ and $R= {\sf A}_5$; or
  \item   $N=\Omega(7,q)$ with $q \equiv \pm 3 \pmod 8$ and $R =\Omega(7,2)$; or 
   \item   $N=\Omega^+(8,q)$ with $q \equiv \pm 3 \pmod 8$  and $R =\Omega^+(8,2)$.
   \end{enumerate} 
   In all cases, there exists a primitive irreducible character character $\gamma$ of $R$ that extends to a complex character of
    $\mathrm{Aut}(R)$ and does  not extend to an irreducible $p$-Brauer  character of  $N$.
   \end{lem}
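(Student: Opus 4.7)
The plan is to derive this from Lemma \ref{lie2} in two steps: first, use the Hall hypothesis to eliminate case (4) of that lemma; and second, upgrade the non-extension conclusion from complex to $p$-Brauer characters.

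For the first step, I observe that $R = H \cap N$ is automatically a Hall subgroup of $N$, since $|N : R| = |HN : H| = |G : H|$ is coprime to $|H|$ and hence to $|R|$. Moreover $R < N$, for otherwise $N \le H$ forces $G = HN = H$, contradicting $H$ proper. In case (4) of Lemma \ref{lie2}, $R = G(q_0)$ with $q = q_0^c$ and $c$ odd; the strict inclusion $R < N$ gives $c \ge 3$. But then the defining characteristic $r$ divides $|R|$ (since $r \mid q_0$) and also divides $|N|/|R|$, because $q/q_0 = q_0^{c-1}$ is a non-trivial $r$-power, so the Sylow $r$-subgroups of $R$ are proper in those of $N$. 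This contradicts the Hall property of $R$, and hence only cases (1)--(3) of Lemma \ref{lie2} can occur.

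For the second step, I take $\gamma$ to be the character produced in the proof of Lemma \ref{lie2}: primitive of degree $4$, $7$, or $28$ on $R$ in cases (1), (2), (3) respectively, and extending to a character of $\operatorname{Aut}(R)$. Since $p \nmid |R|$, the character $\gamma$ is also a $p$-Brauer character of $R$. Suppose some $\phi \in \ibr{N}$ extends $\gamma$; then $\phi(1) = \gamma(1) \in \{4, 7, 28\}$. If $p \nmid |N|$, then $\ibr{N} = \irr{N}$ and Lemma \ref{lie2} immediately applies. Otherwise $p \mid |N : R|$, and I separate cross characteristic from defining characteristic. In cross characteristic, Landazuri--Seitz-type lower bounds (and their refinements due to Guralnick--Tiep and others) on the smallest nontrivial irreducible $p$-Brauer character degree of $N$ exceed $\gamma(1)$ on the range of $q$ admissible under the stated congruences together with the Hall condition. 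In defining characteristic $p = r$, the irreducible Brauer modules of $N$ are the Steinberg tensor products $\bigotimes_i L(n_i)^{(r^i)}$ of dimension $\prod_i (n_i + 1)$, subject to the descent condition needed for the module to live on the simple group $N$; I check directly that the Hall restriction forces $q$ into a range where no such dimension equals $\gamma(1)$.

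The main obstacle is this defining-characteristic step. The natural-module dimensions in defining characteristic can match $\gamma(1)$ exactly (for instance the natural $7$-dimensional module of $\Omega(7,q)$ in case (2), and similar small modules in the other cases), so a bare minimum-degree comparison is insufficient. One must exploit the Hall condition $\gcd(|R|, |N|/|R|) = 1$ together with the given congruences on $q$ very carefully. A lifting-the-exponent analysis shows that, for each of the three cases, the Hall condition drastically restricts $q = r^a$ (indeed in cases (2) and (3) it may leave only finitely many admissible $q$), and a case-by-case inspection, using in addition that $r \nmid |R|$ whenever $p = r$, then suffices to rule out any extension of $\gamma$ to an irreducible $p$-Brauer character of $N$.
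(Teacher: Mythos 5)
Your first step---eliminating case (4) of Lemma \ref{lie2} because the defining prime divides both $|R|$ and $|N:R|$---is exactly the paper's (one-line) argument, and your cross-characteristic step via Landazuri--Seitz/Seitz--Zalesskii lower bounds \cite{SZ} also matches the paper. The genuine gap is the defining-characteristic case $p\mid q$, which you correctly identify as the main obstacle but then propose to resolve by using the Hall condition to restrict $q$ and doing a ``case-by-case inspection.'' That strategy cannot succeed. In case (2) the natural $7$-dimensional module of $N=\Omega(7,q)$ in characteristic $p$ restricts to $R=\Omega(7,2)$ as the reduction mod $p$ of its unique complex character of degree $7$; this restriction is irreducible because $p\nmid|R|$, and indeed this is precisely how the subgroup $\Omega(7,2)$ sits inside $\Omega(7,q)$. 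So the degree-$7$ character $\gamma$ \emph{does} extend to an irreducible $p$-Brauer character of $N$, for every admissible $q$. The same happens in case (3): the $28$-dimensional module $\Lambda^2(V)\cong L(\omega_2)$ of $\Omega^+(8,q)$ restricts to the degree-$28$ character of $\Omega^+(8,2)$. No restriction on $q$ removes these modules, so degree counting with the characters of degree $7$ and $28$ is not merely insufficient---the claimed non-extension is false for them in defining characteristic.

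The paper's fix is to \emph{change the character} when $p$ divides $q$: it takes $\gamma$ of degree $4$, $15$ or $50$ in the three cases and checks against L\"ubeck's tables \cite{Lu} that $N$ has no irreducible representation of that degree in defining characteristic (degrees $15$ and $50$ do not occur among the small restricted module dimensions for $B_3$ and $D_4$, and all defining-characteristic irreducibles of $\PSL(2,q)$, $q$ odd, have odd dimension, so degree $4$ still works for ${\rm PSL}(2,q)$). You need this substitution, or some argument going beyond degree comparison, to close your proof.
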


   \begin{proof} This follows immediately from the previous lemma noting that in the last case there $H$ is not a Hall subgroup.
   The only additional assertion is in the case of $p$-Brauer characters.  Then we take $\gamma$ of degree $4, 7$ or $28$ respectively.  
  These do extend to the automorphism group and by \cite{SZ} are too small to extend to $N$ in any
  characteristic $p$ unless possibly $p$ divides $q$.  In that case, take the characters of degree
  $4, 15$ or  $50$, respectively and use \cite{Lu}. 
  \end{proof}

  \begin{thm}\label{ccl}
 Let $p$ be a prime.
  Suppose that $G$ is an almost simple group with socle $N$ and a Hall subgroup  $H$
  of order not divisible by $p$ such that $H<HN$. 
   Assume that $R=H \cap N$ is a minimal normal subgroup of $H$. 
Assume that $r^G \cap R=r^H$ for every $r \in R$.  
  Then $R$ is simple non-abelian and has a primitive character $1 \ne \gamma \in \irr R$ that extends to a
  complex character of ${\rm Aut}(R)$ but not to 
  an irreducible $p$-Brauer character of $N$.
  \end{thm}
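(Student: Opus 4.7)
The plan is to first rule out $R$ being abelian, deduce that $H$ has odd index in $G$, and then split into cases via the CFSG, feeding each branch into the preceding lemmas.

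First I would show that $R$ is non-abelian. If it were not, then as a minimal normal subgroup of $H$ it would be an elementary abelian $q$-group for some prime $q$; since $H$ is a Hall subgroup of $G$ with $R = H \cap N$, we would have $R \in \syl qN$. Because $N$ is a non-abelian simple group, $N$ has no normal $q$-complement, and Lemma \ref{l:abelian2} would produce an element $a \in R$ with $a^G \cap R \ne a^H$, contradicting our hypothesis. Hence $R$ is a direct product of non-abelian simple groups. In particular $|R|$ is even by Feit--Thompson, and since $R \le H$ with $H$ a Hall subgroup, $H$ has odd index in $G$.

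Next I would invoke the CFSG. If $N$ is sporadic, Lemma \ref{l:sporadic} leaves only two possibilities: $R = H$ cyclic of odd prime order, already excluded by the first step, or $G = M_{23}$, $H = R = M_{22}$; in the latter the lemma tells us $H$ is not $c$-closed in $G$, contradicting the hypothesis $r^G \cap R = r^H$ for every $r \in R$. If $N = {\sf A}_n$ with $n > 4$, then the hypothesis that any two $G$-conjugate elements of $R$ are $H$-conjugate subsumes condition (iii) of Lemma \ref{l:alt} (since ${\sf A}_n$-conjugacy implies $G$-conjugacy), yielding $H = {\sf S}_{n-1}$; combined with $H$ being Hall, part (ii) of that lemma forces $n$ prime, so $R = {\sf A}_{n-1}$ is simple and Lemma \ref{l:js} supplies the required primitive character. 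If $N$ is of Lie type, Lemma \ref{lie} directly classifies the possibilities for $R$ (all simple) and provides the primitive character in each case.

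The heavy lifting---the classification-based case analysis and the explicit construction of characters that fail to extend to modular characters of $N$---has been packaged into the preceding lemmas, so the argument here is essentially the assembly. The subtlest point to verify is that in each case enumerated, the primitive character produced really fails to extend to a $p$-Brauer character (and not merely an ordinary character) of $N$ for the given prime $p$; this is the role played by the appeals to \cite{BK}, \cite{SZ}, and \cite{Lu} in Lemmas \ref{l:js} and \ref{lie}. Once one has checked that the general hypothesis of the theorem plugs correctly into each of the case-analysis lemmas, the conclusion follows.
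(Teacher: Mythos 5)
Your proposal is correct and follows essentially the same route as the paper: Lemma \ref{l:abelian2} rules out abelian $R$ (so $H$ has odd index), and the CFSG case analysis is then delegated to Lemmas \ref{l:sporadic}, \ref{l:alt}, \ref{l:js} and \ref{lie} exactly as the paper does. The only point worth adding is that in the alternating case Lemma \ref{l:js} covers only $p=n$; for the remaining admissible primes $p$ (which then do not divide $|G|=n!$ at all, so $p$-Brauer characters are ordinary characters) one takes the degree-$(n-2)$ character of ${\sf A}_{n-1}$, as indicated in the paragraph preceding that lemma.
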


  \begin{proof}
  We may assume that $G=NH$.   By Lemma \ref{l:abelian2},  $N \cap H$ is a direct product of nonabelian simple groups.
  In particular $H$ contains a Sylow $2$-subgroup of $G$.
  The case where $N$ is of Lie type
  follows from Lemma \ref{lie}.   If $N$ is sporadic, this follows by Lemma \ref{l:sporadic}.  If $N$
  is alternating, the result follows by Lemmas \ref{l:alt} and \ref{l:js}. 
  \end{proof}
  
 Essentially the same proof gives:
  
  \begin{thm}\label{ccl2}
  Suppose that $G$ is an almost simple group with socle $N$ and a subgroup $H$
  of odd index such that $H<HN$.   Then there exists  a character $1 \ne \gamma \in \irr H$
   that does not extend to 
  an irreducible  character of $G$.
  \end{thm}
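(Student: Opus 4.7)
The plan is to argue by contradiction in parallel with the proof of Theorem~\ref{ccl}, working with ordinary (complex) characters throughout and replacing the Hall hypothesis by the weaker assumption that $|G:H|$ be odd.  Suppose for contradiction that every $\gamma\in\irr H$ extends to an irreducible character of $G$.  The same elementary arguments as in Lemma~\ref{elem} (with ordinary characters in place of $p$-Brauer characters) yield that $H$ is $c$-closed in $G$ and that for every $L\nor H$ there exists $K\nor G$ with $K\cap H=L$ and every character of $HK/K$ extending to $G/K$.

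I would then replace $G$ by $HN$ and reduce to the situation in which $R:=H\cap N$ is a minimal normal subgroup of $H$.  Since $G$ is almost simple, any nontrivial normal subgroup of $G$ must contain $N$; combined with the reduction above, the normal-in-$H$ subgroups contained in $R$ can only be $1$ or $R$.  The case $R=1$ is ruled out because $|G:H|$ odd forces $H$ to contain a Sylow $2$-subgroup $P$ of $G$, and $P\cap N$ is a nontrivial Sylow $2$-subgroup of $N$.  If $R$ were abelian it would be an elementary abelian $2$-group equal to a Sylow $2$-subgroup of $N$, and Lemma~\ref{l:abelian2} (which applies because the simple group $N$ has no normal $2$-complement) would contradict $c$-closedness.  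Hence $R$ is a non-abelian direct product of isomorphic simple groups.

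With this structural information in place, I would dispatch the three cases by socle type.  If $N$ is sporadic, Lemma~\ref{l:sporadic} concludes directly that $H$ fails to be $c$-closed, a contradiction.  If $N={\sf A}_n$, then Lemma~\ref{l:alt}(iii) (whose hypothesis is a consequence of $c$-closedness) forces $H={\sf S}_{n-1}$ sitting inside $G={\sf S}_n$ with $n$ odd (the case $G={\sf A}_n$, $H={\sf A}_{n-1}$ is excluded because for $n$ odd the $(n-2)$-cycles split in ${\sf A}_{n-1}$ but fuse in ${\sf A}_n$, already violating $c$-closedness), and then the irreducible character $S^{(n-2,1)}\in\irr{{\sf S}_{n-1}}$ fails to extend to $G$, since by the branching rule no partition of $n$ has $(n-2,1)$ as its unique single-box removal.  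If $N$ is of Lie type, Lemma~\ref{lie2} supplies a primitive $\gamma\in\irr R$ that extends to $\mathrm{Aut}(R)$ but not to any irreducible complex character of $N$, and Theorem~\ref{pext} lifts $\gamma$ to a primitive $\chi\in\irr H$ with $\chi_R=\gamma$.

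The main obstacle, I expect, is the final Clifford-theoretic step in the Lie case: one must infer the non-extendability of $\chi$ from $H$ to $G$ from the non-extendability of $\gamma$ from $R$ to $N$, even though $R$ need not be normal in $G$.  Given a hypothetical extension $\beta\in\irr G$ of $\chi$, the restriction $\beta_R=\chi_R=\gamma$ is an irreducible character of $R$; writing $\beta_N=\sum_j a_j\xi_j$ with $\xi_j\in\irr N$, the positivity identity $\sum_j a_j(\xi_j)_R=\gamma$ permits only one nonzero summand with $a_j=1$ and $(\xi_j)_R=\gamma$, exhibiting $\xi_j\in\irr N$ as an extension of $\gamma$ to $N$ and contradicting Lemma~\ref{lie2}.
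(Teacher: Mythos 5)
Your proposal is correct and follows essentially the same route as the paper: the paper's own proof of Theorem~\ref{ccl2} consists of the reduction to $G=HN$ via Lemma~\ref{elem} followed by the instruction to ``argue precisely as in the previous proof'' (Theorem~\ref{ccl}), and you have reconstructed exactly that argument, with the right substitutions for the ordinary-character setting (Lemma~\ref{lie2} in place of Lemma~\ref{lie}, the degree $n-2$ character of ${\sf S}_{n-1}$ in place of Lemma~\ref{l:js}) and the same Clifford-theoretic step used at the end of the proof of Theorem~\ref{main}. The only point worth noting is that your appeal to Lemma~\ref{l:abelian2} is outside its stated Hall hypothesis, but the paper makes the identical implicit appeal, so this is not a deviation from its argument.
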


  \begin{proof}  We may assume that $G=HN$.  Suppose that every irreducible character of $H$ extends.  By Lemma \ref{elem},   $H$ is $c$-closed in
  $G$ and $H \cap N$ is a minimal normal subgroup of $H$.  Now argue precisely as in the previous proof.
  \end{proof}
  
  \section{Main result}
     \begin{thm}\label{main}
 Let $G$ be a finite group,  let $H$ be a  Hall subgroup of $G$
 which is $c$-closed in $G$, with $|G:H|$ odd.
 Assume that for every $L \nor H$ there exists $K \nor G$ such that $K\cap H=L$. Let $p$ be a prime not dividing $|H|$.
 If every primitive character of $H$ extends to
 some $p$-Brauer character of  $G$, then $H$ has a normal complement in $G$.
 \end{thm}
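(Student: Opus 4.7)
The plan is to induct on $|G|$, supposing $(G,H)$ is a minimal counterexample and deriving a contradiction. The first reduction would apply the hypothesis with $L = 1$ to obtain some $K_0 \nor G$ with $K_0 \cap H = 1$, choose such a $K_0$ of maximal order, and pass to $G/K_0$. One verifies that the five hypotheses survive the quotient (the key points being that $|K_0|$ divides $|G:H|$ and so is odd, that $c$-closedness transfers via the Schur--Zassenhaus complement structure of $HK_0$, and that the normal-subgroup-correspondence pulls back through the natural identification $HK_0/K_0 \cong H$), so induction would yield a normal complement of $HK_0/K_0$ in $G/K_0$ that lifts to one of $H$ in $G$. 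So I may assume no nontrivial $K \nor G$ satisfies $K \cap H = 1$; in particular, every minimal normal subgroup of $G$ meets $H$ nontrivially.

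Next, let $N$ be a minimal normal subgroup of $G$. If $N$ is abelian it must be an elementary abelian $q$-group with $q \in \pi$, hence $N \le H$; and regardless of whether $N$ is abelian, whenever $N \le H$ I would reduce to $(G/N, H/N)$. The key observation is that inflation preserves primitivity: if $\bar\alpha \in \irr{H/N}$ is primitive and $\alpha \in \irr H$ is its inflation, then assuming $\alpha = \beta^H$ for $\beta \in \irr U$, $U < H$, the containment $N \le \ker\alpha = \bigcap_h(\ker\beta)^h$ combined with $N \nor H$ yields $N \le \ker\beta \le U$, so $\beta$ inflates from $U/N$ and $\bar\alpha = \bar\beta^{H/N}$, contradicting primitivity. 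Any extension $\hat\alpha \in \ibr G$ of $\alpha$ then satisfies $\hat\alpha(n) = \alpha(n) = \alpha(1) = \hat\alpha(1)$ for $n \in N \le H$, so $N \le \ker\hat\alpha$ and the extension descends to $G/N$. Combined with routine checks of the other hypotheses, induction applies and Lemma \ref{nor} produces a normal complement, a contradiction. So I may assume $N \not\le H$.

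In the remaining case $N = S^k$ with $S$ simple nonabelian and $N \not\le H$, set $R := N \cap H$, a Hall subgroup of $N$. Any minimal normal subgroup $L$ of $H$ contained in $R$ corresponds via the hypothesis to a minimal $K \nor G$ with $K \cap H = L$; minimality of $K$ together with the first reduction forces $K$ to be a minimal normal subgroup of $G$, and since $L \le K \cap N \ne 1$ one gets $K = N$, whence $L = R$. Thus $R$ is minimal normal in $H$. If $R$ were abelian, Lemma \ref{l:abelian2} applied to the resulting elementary abelian $q$-subgroup (using that the semisimple $N$ has no normal $q$-complement) would produce $a \in R$ with $a^G \cap R \ne a^H$, contradicting $c$-closedness. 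So $R = T_1 \times \cdots \times T_k$ with each $T_i = R \cap S_i \cong T$ a nonabelian simple Hall subgroup of $S_i$. The technical heart of the argument is then to pass to the almost simple quotient $\tilde G := N_G(S_1)/C_G(S_1)$ with $\tilde H$ the image of $H \cap N_G(S_1)$ and socle $\tilde N \cong S$, and verify the hypotheses of Theorem \ref{ccl}; most delicately, $c$-closedness of $\tilde H$ in $\tilde G$ uses that any $h \in H$ conjugating one nontrivial element of $T_1$ to another must normalize $S_1$ (since the image lies in $S_1$), hence $h \in H \cap N_G(S_1)$.

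Theorem \ref{ccl} then supplies a primitive character $\tau \in \irr T$ extending to $\Aut(T)$ but not to any irreducible $p$-Brauer character of $S$. Theorem \ref{pext}, applied to $R \nor H$, produces a primitive $\chi \in \irr H$ extending $\tau \times \cdots \times \tau \in \irr R$; by the main hypothesis $\chi$ extends to some $\hat\chi \in \ibr G$. Decomposing $\hat\chi|_N = \sum_i c_i \delta_i$ with $\delta_i \in \ibr N$ and restricting to $R$ gives $\tau \times \cdots \times \tau = \sum_i c_i(\delta_i|_R)$; since the coefficients in Brauer character decompositions are non-negative integers, some single $\delta \in \ibr N$ must satisfy $\delta|_R = \tau \times \cdots \times \tau$. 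Because irreducible Brauer characters of $N = S^k$ are tensor products of irreducible Brauer characters of $S$, writing $\delta = \delta_1 \otimes \cdots \otimes \delta_k$ yields $\delta_j|_{T} = \tau$ for each $j$, contradicting Theorem \ref{ccl}. The main obstacle will be the clean verification that the hypotheses of Theorem \ref{ccl} survive the passage to $\tilde G$.
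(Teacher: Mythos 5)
Your overall architecture matches the paper's (induct on $|G|$, dispose of minimal normal subgroups, pass to $\norm G{S_1}/\cent G{S_1}$, invoke Theorems \ref{ccl} and \ref{pext}), and your endgame via the tensor decomposition of $\ibr{S^k}$ is fine. But your very first reduction has a genuine gap. To pass to $G/K_0$, where $K_0\nor G$ is maximal with $K_0\cap H=1$ (necessarily a $\pi'$-group), you must verify that every primitive character of $HK_0/K_0\cong H$ extends to a $p$-Brauer character of $G/K_0$. The hypothesis only hands you an extension $\beta\in\ibr G$ of the corresponding primitive $\alpha\in\irr H$, and there is no a priori reason for $\beta$ to contain $K_0$ in its kernel; you list the transfer of $c$-closedness and of the normal-subgroup correspondence as the key points, but this fifth hypothesis is the one that actually requires work, and it is exactly the content of the paper's Lemma \ref{prim}. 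There, primitivity of $\alpha$, Clifford theory applied to $H$-invariant Sylow subgroups of the complemented normal subgroup, Green's theorem, and the Brauer-character extension theorems combine to show $\beta$ restricts to a multiple of a linear character that extends to $G$, so that a twist of $\beta$ kills $K_0$. Without this (or an equivalent argument) your induction does not start, and the same issue recurs at every later point where you quotient by a normal subgroup meeting $H$ trivially. This is precisely where the word ``primitive'' in the hypothesis earns its keep.

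A second, smaller omission: you never reduce to $G=HN$. The paper does this by noting that a normal complement of $H$ in $HN<G$ (supplied by minimality) would be a normal subgroup of $N$ of odd order, hence trivial because it is a direct product of nonabelian simple factors --- this is the one place the hypothesis that $|G:H|$ is odd is used. Without $G=HN$ you cannot conclude that $H$ permutes the simple factors of $N$ transitively, nor that $\norm HS_1=H\cap\norm G{S_1}$ is a Hall subgroup of $\norm G{S_1}$, both of which you need in order to feed $\norm G{S_1}/\cent G{S_1}$ into Theorem \ref{ccl} (whose hypotheses include that the subgroup is Hall). Both gaps are repairable with the paper's own lemmas, but they are the two places where the proof genuinely uses its more delicate hypotheses, so ``routine checks'' does not suffice.
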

 
 \begin{proof}
 Let $G$ be a  counterexample minimizing $|G|$. Let $\pi$ be the set of divisors of $|H|$. 
 We may assume that $H<G$.  Let $N$ be a minimal normal subgroup of $G$.
 If $N \sbs H$, then $H/N$ is $c$-closed in $G/N$ by Lemma 2.2(c) in \cite{S}. By minimality, we have that
 $H/N$ has a normal complement $K/N$ in $G/N$. 
 Then we apply Lemma \ref{nor}.  Hence, we may assume that $N$ is not contained in $H$.
 
 Suppose first that $N$ is an abelian $q$-group. 
 Then $q$ does not divide $|H|$, and therefore $H \cap N=1$.
 We claim that $HN/N$ satisfies the hypothesis
 of the theorem.  
 We have that $HN/N$ is $c$-closed in $G/N$ (for instance, by Lemma 2.2(d) in \cite{S}).  Suppose that $R/N$ is normal in $HN/N$. Let $L=R\cap H \nor H$. By the hypothesis, there is $K \nor G$ such that
 $K\cap H=L$.  Since $KN/K$ is $\pi'$, then $H \cap KN=H\cap K=L$. 
 Then $KN \cap HN=LN=R$.
 Finally, let $\alpha \in \irr{HN/N}$ be primitive. Then
 $\alpha_H$ is primitive, and therefore it has an extension to $G$ by hypothesis.  By  Lemma \ref{prim}, there is an extension $\beta \in \ibr{G}$ of $\alpha_H$ that contains $N$ in its kernel. Necessarily we have that $\alpha=\beta_{HN}$. We have that $HN/N$ satisfies the hypothesis, as claimed, and by minimality we are done. 
 
 So we may assume that $N$ is a direct product of non-abelian simple groups.
 Let $W=HN$. We have that $H$ is c-closed in $W$ and that every primitive character of $H$ extends to some $p$-Brauer character of $W$.
 Also, if $L \nor H$, then there is $K \nor G$ such that $K\cap H=L$,
 and therefore $K \cap W \cap H=L$.
 Suppose that $W<G$. By minimality, we have that $H$ has a normal
 complement $F_0$ in $W$. Now $N$ has $\pi$-index in $W$, so it contains $F_0$. But $F_0 \nor N$ so it is a direct product of some of the factors of $N$.
 However, $|F_0|=|W:H|$ is odd. This is a contradiction.
 
 So we may assume that $G=HN$, where $N$ is any minimal normal subgroup of $G$. Since $G/N$ is a $\pi$-group, we have that $N$ is the unique minimal normal subgroup of $G$ (since otherwise $G \le G/N \times G/M$ would be a $\pi$-group).
 
Using that every normal subgroup of $H$ is the intersection of a normal subgroup of $G$ with $H$, we deduce that $H \cap N$ is a minimal normal subgroup of $H$. In fact, it is the unique one. 

Let $S \nor N$ be simple. Then $N \le \norm GS$. If $H=\norm HS h_1 \cup \ldots \cup \norm HS h_t$, then notice that $N=S^{h_1}\times  \cdots \times S^{h_t}$, using that $N$ is a minimal normal subgroup of $G$.
Also, $H\cap N=(H \cap S)^{h_1} \times \cdots \times (H \cap S)^{h_t}$,
using that $H \cap N$ is a minimal normal subgroup of $H$.
Since $S \nor\nor G$, we have that $S \cap H$ is a Hall $\pi$-subgroup of $S$. Also, we have that $S \cap H$ is a minimal normal subgroup of
$\norm HS$.  Write $J=S \cap H$. 
We claim that $x^{\norm GS} \cap J=x^{\norm HS}$ for every $x \in J$. Suppose that $x, x^v \in J$, where $v \in \norm GS$, and $x \ne 1$. By hypothesis, we have that $x^v=x^h$ for some $h \in H$.
Then $1 \ne x \in S \cap S^{h^{-1}}$, and therefore $S=S^h$. Hence $x^v=x^h$ for
some $h \in \norm HS$, as claimed.

Now, we claim that the same property holds in $JC/C$ with respect to $\norm GS/C$, where $C=\cent GS$.
Let $x\in J$ and suppose that $x^v \in JC$, where $v \in \norm GS=N\norm HS$. Write $N=T \times S$ and $v=wst$, where $w \in \norm HS$, $s \in S$ and $t \in T$. Then $x^v=(x^{ws})^t$. Since $J \nor \norm HS$, we have that
$x^v=x^{ws}$. Write $x^{ws}=jc$, where $j \in J$ and $c \in C$. 
Hence $j^{-1}x^{ws}=c \in S \cap C=1$. Therefore $x^{v} \in J$. But then
$x^v=x^k$ for some $k \in \norm HS$. This proves the claim. 

We claim now that  the almost simple group $\norm GS/C$ satisfies the   hypothesis of Theorem \ref{ccl} with respect to the socle $SC/C$ and the Hall $\pi$-subgroup $\norm HSC/C$. Notice first that the socle $SC/C$
is not a $\pi$-group. Otherwise,   $S$ is a $\pi$-group, $N$ is a $\pi$-group
  and $H=G$, which is not possible.
Let $R=(\norm HSC/C) \cap (SC/C)$. We need to prove that $R$ is a minimal normal subgroup
of $\norm HSC/C$ and that
$$r^{\norm GS/C} \cap R=r^{\norm HSC/C}$$
for $r \in R$. 
Notice now that $J=H \cap S$ is a Hall $\pi$-subgroup of $S\nor\nor G$.
Then $JC/C$ is Hall $\pi$-subgroup of $SC/C$ (since the natural map $S \rightarrow SC/C$
is an isomorphism).  Also $\norm HS$ is a Hall $\pi$-subgroup of $\norm GS$ and therefore $\norm HS \cap SC$ is a Hall $\pi$-subgroup of $SC$. Thus $(\norm HS \cap SC)C/C$ is a Hall $\pi$-subgroup of $SC/C$.  Therefore
$$(\norm HS \cap SC)C/C=(H\cap S)C/C =JC/C\, .$$
By Dedekind's lemma, $(\norm HS \cap SC)C=\norm HSC \cap SC$. 
And we conclude that $R=JC/C$, which is naturally isomorphic to $J$.
Since $J$ is a minimal normal subgroup of $\norm HS$, we conclude that $R$ is a minimal normal subgroup
of $\norm HSC/C$.  The equality
$$r^{\norm GS/C} \cap R=r^{\norm HSC/C}$$
follows now from the previous paragraph.

By Theorem \ref{ccl}, we have that
$R$ is simple non-abelian    and has a primitive character $1 \ne \gamma \in \irr R$ that extends to a character of ${\rm Aut}(R)$ but not to a $p$-Brauer character of  $SC/C$. 
Using the natural isomorphism $S \rightarrow SC/C$, this easily implies that $J=H\cap S$ is a non-abelian simple group,
and $J$ has a primitive character $1 \ne \gamma \in \irr J$ that extends to 
some  character of ${\rm Aut}(J)$ but not to a $p$-Brauer character of $S$.
By Theorem \ref{pext}, there is a primitive $\alpha \in \irr H$ such that $\alpha_{H\cap S}=\gamma$. By hypothesis, there is $\tau \in \ibr G$ such that $\tau_H=\alpha$. Then $\tau_{H\cap S}=\gamma$, and therefore $\tau_S$ extends $\gamma$. This is a contradiction.
\end{proof}

We are now ready to prove Theorem A.
\begin{cor}
Let $G$ be a finite group and let $H$ be a Hall subgroup of $G$.  Let $p$ be a prime not dividing $|H|$.
 If every irreducible character of $H$ extends to
 some $p$-Brauer character of  $G$, then $H$ has a normal complement in $G$.
 \end{cor}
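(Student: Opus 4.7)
The corollary falls out of the two main theorems already proved in the paper: the solvable case (Theorem~\ref{solv}) and the technical result Theorem~\ref{main}. The plan is simply to feed the hypotheses of the corollary into one of these two results, splitting on the parity of $|H|$.

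First I would dispatch the case where $|H|$ is odd. By the Feit--Thompson theorem $H$ is then solvable, and Theorem~\ref{solv} applies directly and produces a normal complement. So we may assume from now on that $|H|$ is even. Since $H$ is a Hall subgroup of $G$, the indices $|H|$ and $|G:H|$ are coprime, and therefore $|G:H|$ is odd. This is the only place where the odd-index assumption of Theorem~\ref{main} is used.

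Next I would verify the remaining hypotheses of Theorem~\ref{main}. Lemma~\ref{elem}(i) gives directly that $H$ is $c$-closed in $G$, since every $\alpha\in\irr H$ extends to a $p$-Brauer character of $G$. Lemma~\ref{elem}(ii) then provides, for every $L\nor H$, a normal subgroup $K\nor G$ with $K\cap H=L$, which is exactly the intersection hypothesis in Theorem~\ref{main}. Finally, the hypothesis that \emph{every} $\alpha\in\irr H$ extends to some $\beta\in\ibr G$ trivially includes the weaker hypothesis that every \emph{primitive} $\alpha$ extends. With all conditions checked, Theorem~\ref{main} yields a normal complement to $H$ in $G$.

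There is no real obstacle here; the corollary is a bookkeeping consequence of Theorem~\ref{main} together with Theorem~\ref{solv}. The only subtle point is that the odd-index hypothesis of Theorem~\ref{main} is not in the corollary's statement, and one should be careful to note that this is precisely why the preliminary reduction to $|H|$ even (via Feit--Thompson and Theorem~\ref{solv}) is needed.
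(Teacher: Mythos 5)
Your proposal is correct and follows essentially the same route as the paper: reduce via Theorem~\ref{solv} to the case where $H$ is non-solvable (equivalently, by Feit--Thompson, $|H|$ is even and hence $|G:H|$ is odd), then verify the hypotheses of Theorem~\ref{main} using Lemma~\ref{elem}. The only cosmetic difference is that you split on the parity of $|H|$ rather than on its solvability, which amounts to the same use of Feit--Thompson.
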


\begin{proof}
By Theorem \ref{solv}, we may assume that $H$ is non-solvable. Hence, $|G:H|$ is odd. 
Now we apply Lemma \ref{elem} and Theorem \ref{main}.
\end{proof}

Notice that the conclusion of Theorem \ref{main} does not hold if the hypothesis that every normal subgroup $L$ of $H$ is the intersection
of a normal subgroup of $G$ with $H$ is dropped, as shown by ${\sf S}_4$ and ${\sf S}_5$.  

\medskip

\medskip

%
 

\medskip

\end{document}